\documentclass[11pt, reqno, a4paper]{amsart}
\usepackage{amssymb,amsfonts,amsthm,enumerate}
\usepackage[utf8]{inputenc}
\usepackage{listings}
\usepackage{bm}
\numberwithin{equation}{section}
\usepackage[margin=1.2in]{geometry}
\usepackage{bigints}
\usepackage[pagewise]{lineno}

\usepackage{color}
\usepackage[bookmarks]{hyperref}

\addtolength{\textheight}{3mm} \addtolength{\textwidth}{11mm}
\addtolength{\oddsidemargin}{-8mm}
\addtolength{\evensidemargin}{-8mm} \addtolength{\topmargin}{-5mm}

\allowdisplaybreaks[4]

\vfuzz8pt 

\newtheoremstyle{myremark}{10pt}{10pt}{}{}{\bfseries}{.}{.5em}{}

 \newtheorem{thm}{Theorem}[section]
 
 \newtheorem{lem}[thm]{Lemma}
 \newtheorem{prop}[thm]{Proposition}
 \theoremstyle{definition}
 \newtheorem{defn}[thm]{Definition}
 \newtheorem{exmp}[thm]{Example}
 \newtheorem{rem}[thm]{Remark}


\allowdisplaybreaks[4]


\title[On Weighted Orlicz-Sobolev inequalities]{On  Weighted Orlicz-Sobolev inequalities}

\author[T. V. Anoop, U. Das, and S. Roy]{T. V. Anoop$^{1,*}$, Ujjal Das$^{2}$, and Subhajit Roy$^{3}$}

\keywords{Weighted Sobolev inequality, Orlicz space, Orlicz-Lorentz space, Muckenhoupt condition, 
$\Phi$-Laplacian.}

\subjclass{46E30, 35A23, 47J30.}

{\email{anoop@iitm.ac.in, ujjal.rupam.das@gmail.com, rsubhajit.math@gmail.com}}

\thanks{$^*$Corresponding author.}
\begin{document}

\maketitle

\centerline{$^{1,3}$Department of Mathematics, Indian Institute of Technology Madras,
 }
\centerline{Chennai - 600036, India}
\centerline{$^{2}$Department of Mathematics, Technion - Israel Institute of Technology}
\centerline{Haifa 32000, Israel}






\begin{abstract}
Let $\Omega$ be an open subset of $\mathbb{R}^N$ with $N\geq 2.$ We identify various classes of Young functions $\Phi$ and $\Psi$, and function spaces for a weight function $g$ so that the following weighted Orlicz-Sobolev inequality holds:  
\begin{equation*}\label{ineq:Orlicz}
    \Psi^{-1}\left(\int_{\Omega}|g(x)|\,\Psi(|u(x)| )dx \right)\leq C\Phi^{-1}\left(\int_{\Omega}\Phi(|\nabla u(x)|) dx \right),\;\;\;\forall\,u\in \mathcal{C}^1_c(\Omega),
\end{equation*}
 for some $C>0$.
As an application, we study the existence of eigenvalues for certain nonlinear weighted eigenvalue problems.
\end{abstract}

\section{Introduction}
For an open set $\Omega\subset\mathbb{R}^N$ with $N\geq 2$ and $p,q\in (1,\infty)$, there are several results  available in the literature that provide various weight functions $g\in L^1_\text{loc}(\Omega)$ for which the following weighted Sobolev inequality holds:
\begin{equation}\label{p-qmodular}
     \left(\int_{\Omega}|g(x)| |u(x)|^q dx\right)^\frac{1}{q} \leq C\left(\int_{\Omega}|\nabla u(x)|^p dx \right)^{\frac{1}{p}},\;\;\;\forall\, u\in \mathcal{C}^1_c(\Omega),
\end{equation}
for some $C>0$. For example, see the references listed below for various choices of  $p,\,q,\, N$, and $g$ that ensure \eqref{p-qmodular}:

\begin{itemize}
    \item $p=q=2:$ see \cite{Feffer1983,lam2020, Visciglia2005}.
    \item $p=q:$ see \cite{Chan1985, Edmunds1999}.
  \item  $q\in [p,p^*]:$ Caffarelli-Kohn-Nirenberg \cite{C-K-N1984} proved \eqref{p-qmodular} for $g(x)=|x|^{-\frac{N}{\alpha(p,q)}}$,
where $p^*=\frac{Np}{N-p}$ and $\alpha(p,q)=\frac{Np}{N(p-q)+pq}$. See also \cite{Tarentello2002, Kerman1985, Mazya20002}.
\item   $q\in (0,p^*]:$ authors in \cite{Anoop2021} provide various classes of function spaces for $g$ satisfying \eqref{p-qmodular}, which include most of the weight functions considered in the above references.
\end{itemize} 

The main aim of this manuscript is to generalize 
 \eqref{p-qmodular} by replacing the convex functions $t^p$ and $t^q$ with more general {\it{Young functions}}. A function  $\Phi:[0,\infty)\to [0,\infty]$ is called a Young
function if it admits a representation 
\begin{equation*}
    \Phi(t)=\int_0^t \varphi(s) ds\;\;\;  \text{for}\;\; t\geq 0,
\end{equation*} 
where  $\varphi$ is an increasing right continuous function on $[0,\infty)$ such that $\varphi(t)=0$ if and only if $t=0$. The {\it{complementary}} Young function of $\Phi$ is denoted by $\tilde{\Phi}$ and is defined as 
$$\tilde{\Phi}(t)=\int_0^t\tilde{\varphi}(s) ds,$$ where $\tilde{\varphi}(s)=\sup\{t:\varphi(t)\leq s\}$ is the right continuous inverse of $\varphi$. For example, $A_p(t):=t^p$  with $p\in (1,\infty)$  is a Young function.
A Young function $\Phi$ is said to satisfy the {\it{$\Delta_2$-condition}} ($\Phi\in \Delta_2$) if there exists a constant $C\geq 1$ such that 
\begin{equation*}
    \Phi(2t)\leq C\Phi(t),\;\;\;\forall\,t\geq 0.
\end{equation*}
We say that the Young function $\Phi$ satisfies the {$\Delta^\prime$-\it{condition}} ($\Phi\in \Delta^\prime$) if there exists a constant $C\geq  1$ such that 
\begin{equation}\label{deld1}
    \Phi(st)\leq C\Phi(s)\Phi(t)
\end{equation}
for all $s,t\geq 0$. Notice that if $\Phi\in \Delta^\prime$, then $\Phi\in \Delta_2$. 
Associated to a Young function $\Phi,$ we define  $p^-_\Phi$ and $p^+_\Phi$ (cf. \cite{Lai1993})  as 
\begin{equation}\label{*p-Phi*}
p^-_\Phi:=\inf_{t>0}\frac{t\varphi(t)}{\Phi(t)},\;\;\;\;p^+_\Phi:=\sup_{t>0}\frac{t\varphi(t)}{\Phi(t)}.
\end{equation}
  For any Young function $\Phi$, one can check that  $\Phi(t)\leq t\varphi(t)\leq\Phi(2t)$ for all $t \geq 0$, which implies $p^-_\Phi\geq 1.$  If 
 $\Phi\in\Delta_2$, then $p^-_\Phi,p^+_\Phi\in [1,\infty)$. Also, note that for $\Phi=A_p,$  we have $p^+_\Phi=p^-_\Phi=p.$

In this article, we look for a pair $(\Phi,\Psi)$ of Young functions that satisfy the $\Delta_2$ (or $\Delta^\prime$)-condition and function spaces for weight function $g$ so that the following {\it{weighted Orlicz-Sobolev inequality}} holds for some $C>0$:
\begin{equation}\label{modular}
    \Psi^{-1}\left(\int_{\Omega}|g(x)|\,\Psi(|u(x)|)dx \right)\leq C\Phi^{-1}\left(\int_{\Omega}\Phi(|\nabla u(x)|)dx \right),\;\;\;\forall\,u\in \mathcal{C}^1_c(\Omega).
\end{equation}
\begin{defn}
For a pair of Young functions $(\Phi,\Psi)$, we define the admissible function space for the weight function in \eqref{modular}  as 
\begin{equation*}
    \mathcal{H}_{\Phi,\Psi}(\Omega)=\left\{g\in L^1_\text{loc}(\Omega): g \text{ satisfies} \,\eqref{modular}\right\}.
\end{equation*}
\end{defn}
First, we consider the case $\Phi=\Psi$ of \eqref{modular}. In this case, we find sufficient conditions on  $\Phi$ and admissible function spaces for  $g$ so that the following variant of \eqref{modular} holds:
\begin{equation}\label{modulat phi}
    \int_{\Omega}|g(x)|\,\Phi(|u(x)|)dx \leq C\int_{\Omega}\Phi(|\nabla u(x)|) dx,\;\;\;\forall\,u\in \mathcal{C}^1_c(\Omega).
\end{equation}
 If  $\Omega$ is bounded in one direction, then for any Young function $\Phi$  satisfying the $\Delta_2$-condition, we have the following {\it{Poincar\'e inequality}} (see \cite[Lemma 2.9]{salort2021}, \cite[Section 2.4]{Gossez1974}): 
\begin{equation*}
    \int_{\Omega}\Phi(|u(x)|)dx \leq C\int_{\Omega}\Phi(|\nabla u(x)|) dx,\;\;\; \forall\,u\in \mathcal{C}^1_c(\Omega),
\end{equation*}
where $C$ is a positive constant. From the above inequality, it is clear that \eqref{modulat phi} holds for $g\in L^\infty{(\Omega)}$, i.e., $L^\infty(\Omega)\subset \mathcal{H}_{\Phi,\Phi}(\Omega)$. 
In \cite{kal2008}, \eqref{modulat phi} was proved for $N=1$ for certain $\Phi$ and $g$. For $N\geq 3$ and $u\in \mathcal{C}_c(\mathbb{R}^N)$, consider 
the Riesz potential operator given by $$P(u)(x)=\int_{\mathbb{R}^N}\frac{|u(y)|}{|x-y|^{N-1}}dy.$$
Since $|u(x)|\leq \frac{1}{N\omega_N}P(|\nabla u(x)|)$ (see \cite[equation 1.5]{Anoop2021}),  \eqref{modulat phi} easily follows from the following convolution inequality: 
 \begin{equation}\label{fund2}
   \int_{\mathbb{R}^N}|g(x)|\,\Phi\left(|P(u)(x)|\right)dx \leq C\int_{\mathbb{R}^N}\Phi( |u(x)|) dx, \;\;\;\forall\,u\in \mathcal{C}_c(\mathbb{R}^N),
\end{equation}
where $C$ is a positive constant. Thus, for a given $\Phi$, if $g$ satisfies \eqref{fund2}, then $g$ satisfies \eqref{modulat phi}, i.e. $g\in \mathcal{H}_{\Phi,\Phi}(\Omega)$.  Many authors provided various sufficient conditions on $g$ and $\Phi$ so that \eqref{fund2} holds. For example, see \cite{Sawyer1992,Sawer1988} for $\Phi=A_p$, and  \cite[Theorem 2]{Lai1993} for more general Young function $\Phi$ with $\Phi,\tilde{\Phi}\in \Delta_2$.

 To state our first result, we make the following assumption on $\Phi$:
 \begin{equation} \tag{H1} \label{H1}
\int_{0}^{1}\left(\frac{s}{\Phi(s)}\right)^{\frac{1}{N-1}} ds < \infty.\end{equation}
One can show that a Young function $\Phi$ with $p^+_\Phi<N$ satisfies \eqref{H1} (see \eqref{eqn-delta2-2}).   Also, $\Phi=A_p$ satisfies \eqref{H1} if and only if $p<N$. Indeed, the condition \eqref{H1} plays the role of ``the dimension restriction in Sobolev inequalities" for general $\Phi$, for instance, see (\cite[Theorem 1]{cianchi1996}, \cite[Theorem 1]{Cianchi2000}). For a Young function $\Phi$, we consider the following  Young function (cf. \cite{cianchi1996}):
   \begin{equation}\label{Phi--N}
     \Phi_N(t)= \int_0^ts^{N^\prime-1}\left(H_\Phi^{-1}\left(s^{N^\prime}\right)\right)^{N^\prime}  ds\; \;\; \text{for}\;t\geq 0, 
      \end{equation}
      where $H_\Phi^{-1}$ is the inverse of $H_\Phi(t)=\displaystyle\int_0^t\frac{\tilde{\Phi}(s)}{s^{1+N^\prime}}ds$ and $N^\prime=\frac{N}{N-1}$. Now we define 
    \begin{equation}\label{B-phi}
B_\Phi=\Phi_N\circ\Phi^{-1}.
      \end{equation}
 In general, $B_\Phi$ need not be a Young function. However, we provide a sufficient condition on $\Phi$ so that
  $B_\Phi$ is a Young function (see Lemma \ref{B-Phi-Young}). Let  $L^\Phi(\Omega)$ denotes the Orlicz space generated by $\Phi.$ Then we have the following result:
 \begin{thm}\label{thm:3}
 Let $\Omega$ be an open subset of $\mathbb{R}^N$, and $\Phi$ be a Young function such that  $B_\Phi$ is a Young function and $\Phi,\tilde{\Phi}\in\Delta^\prime.$  In addition, assume that $\Phi$ satisfies \eqref{H1} when $|\Omega|=\infty$. If $g \in L^{\tilde{B}_{\Phi}}(\Omega)$, then there exists  $C=C(N,\Phi)>0$ so that
   \begin{equation*}
 \int_{\Omega}|g(x)|\,\Phi(|u(x)|)dx\leq C\|g\|_{L^{\tilde{B}_\Phi}(\Omega)}\int_{\Omega}\Phi(|\nabla u(x)|)dx,\;\;\; \forall \,u\in \mathcal{C}^1_c(\Omega).
 \end{equation*} 
 \end{thm}
 Our proof of the above theorem is based on the embedding of the {\it{Beppo-Levi space $\mathcal{D}_0^{1,\Phi} (\Omega)$}} (the completion of $\mathcal{C}_c^1(\Omega)$ 
 with respect to the norm $\|u\|:=\|\nabla u\|_{L^\Phi(\Omega)})$ into the Orlicz space $L^{\Phi_N}(\Omega)$, due to Cianchi \cite[Theorem 1]{cianchi1996}.

Next, for a Young function $\Phi$ satisfying certain conditions, we use two different methods to provide {\it{Orlicz-Lorentz}} type admissible spaces for $g$ so that \eqref{modulat phi} holds. The first method uses the optimal embedding obtained by Cianchi \cite[Theorem 1.1]{Cianchi2004}. Whereas the second method is based on the{\it{ Muckenhoupt}} type condition for the one-dimensional weighted Hardy inequalities (see \cite[Theorem 5]{LQ}) 
involving the Young function.

To describe the first method, we introduce a {\it{rearrangement-invariant Banach function space}} 
associated to $\Phi$. Let $\Omega\subset\mathbb{R}^N$ be an open set, and $\mathcal{M}(\Omega)$ be the set of all extended real-valued Lebesgue measurable functions that are finite a.e. in $\Omega$. For a Young function $\Phi$ satisfying $p^+_\Phi<N$ and $g\in \mathcal{M}(\Omega)$, we define $$\|g\|_{L^{\Phi,\infty}(\Omega)}=\sup_{0<s< |\Omega|}\left\{\frac{g^{**}(s)}{\Phi(s^{-\frac{1}{N}})}\right\},$$
    where $g^{**}$ is the maximal function (see Definition \ref{maximal}) of the one-dimensional decreasing rearrangement $g^*$ of $g$. Now, we consider the following function space:
     \begin{equation}\label{Phi-infty}
     L^{\Phi,\infty}(\Omega)=\left\{g\in \mathcal{M}(\Omega) :\|g\|_{L^{\Phi,\infty}(\Omega)}<\infty\right\}.
    \end{equation}
One can verify  that $ L^{\Phi,\infty}(\Omega)$ is a rearrangement-invariant Banach function space with respect to $\|g\|_{L^{\Phi,\infty}(\Omega)}$ (see $(i)$ of Remark \ref{BF-S}). In the following theorem, we obtain the admissibility of the space $L^{\Phi,\infty}(\Omega)$ for the weight function $g$ under certain assumptions on $\Phi$.
 \begin{thm}\label{larentzthm} 
 Let $\Omega$ be an open subset of $\mathbb{R}^N$. Let $\Phi$ be a Young function such that $\Phi\in \Delta_2$, $\tilde{\Phi}\in\Delta^\prime$, and $p^+_\Phi<N$. If $g\in L^{\Phi,\infty}(\Omega)$, then there exists $C=C(N,\Phi)>0$ so that
 \begin{equation*}
\int_{\Omega}|g(x)|\,\Phi(|u(x)|)dx \leq C\|g\|_{L^{\Phi,\infty}(\Omega)}\int_{\Omega}\Phi(|\nabla u(x)|) dx,\;\;\;\forall\,u\in \mathcal{C}^1_c(\Omega).
 \end{equation*}
   \end{thm}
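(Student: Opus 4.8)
The plan is to transfer the inequality to a one–dimensional weighted Hardy inequality via symmetric rearrangement. We may assume $u\ge 0$. Let $u^*$ denote the decreasing rearrangement of $u$ on $(0,|\Omega|)$; since $\Phi$ is non-decreasing we have $(\Phi(|u|))^*=\Phi(u^*)$, so the Hardy--Littlewood inequality, $g^*\le g^{**}$, and the definition of $\|\cdot\|_{L^{\Phi,\infty}(\Omega)}$ give
\begin{equation*}
\int_{\Omega}|g(x)|\Phi(|u(x)|)\,dx\le\int_0^{|\Omega|}g^*(s)\,\Phi\big(u^*(s)\big)\,ds\le\|g\|_{L^{\Phi,\infty}(\Omega)}\int_0^{|\Omega|}\Phi\big(s^{-1/N}\big)\,\Phi\big(u^*(s)\big)\,ds.
\end{equation*}
Thus it suffices to prove $\int_0^{|\Omega|}\Phi(s^{-1/N})\Phi(u^*(s))\,ds\le C\int_{\Omega}\Phi(|\nabla u|)\,dx$ with $C=C(N,\Phi)$; note that $P_\Phi<N$ forces \eqref{H1}, so the objects below are well defined when $|\Omega|=\infty$.

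For the gradient side I would invoke the Orlicz P\'olya--Szeg\H{o} inequality: passing to the spherically symmetric decreasing rearrangement $u^\star$ on the ball $\Omega^\star$ of equal measure and computing $|\nabla u^\star|$ explicitly yields
\begin{equation*}
\int_0^{|\Omega|}\Phi\big(\gamma_N\, s^{1-1/N}\,|(u^*)'(s)|\big)\,ds\le\int_{\Omega}\Phi(|\nabla u|)\,dx,\qquad u^*(s)=\int_s^{|\Omega|}|(u^*)'(r)|\,dr,
\end{equation*}
with $\gamma_N>0$ a purely dimensional constant (coming from the isoperimetric inequality). Putting $\rho(r):=\gamma_N r^{1-1/N}|(u^*)'(r)|$ we obtain $\int_0^{|\Omega|}\Phi(\rho)\,dr\le\int_{\Omega}\Phi(|\nabla u|)\,dx$ and $u^*(s)=\gamma_N^{-1}\int_s^{|\Omega|}r^{1/N-1}\rho(r)\,dr$.

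Now $\tilde\Phi\in\Delta^\prime$ is equivalent to $\Phi\in\nabla^\prime$, i.e. $\Phi(x)\Phi(y)\le\Phi(bxy)$ for some $b>0$ (a priori for large $x,y$; replacing $\Phi$ by an equivalent Young function, which is harmless since $\Phi\in\Delta_2$, makes this hold for all $x,y\ge0$). Applying this with $x=s^{-1/N}$, $y=u^*(s)$,
\begin{equation*}
\Phi\big(s^{-1/N}\big)\,\Phi\big(u^*(s)\big)\le\Phi\big(c_0\,(S\rho)(s)\big),\qquad (S\rho)(s):=s^{-1/N}\!\int_s^{|\Omega|}\!r^{1/N-1}\rho(r)\,dr,
\end{equation*}
where $c_0=b\gamma_N^{-1}$. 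The operator $S$ has a kernel homogeneous of degree $-1$, and a change of variables $r=\tau^N$, $s=\sigma^N$ turns the desired estimate $\int_0^{|\Omega|}\Phi(c_0 S\rho(s))\,ds\le C\int_0^{|\Omega|}\Phi(\rho(r))\,dr$ into the power-weighted modular Hardy inequality
\begin{equation*}
\int_0^{M}\Phi\big(c_0 N\,\sigma^{-1}\!\!\int_\sigma^M P(\tau)\,d\tau\big)\,\sigma^{N-1}\,d\sigma\le C\int_0^M\Phi(P(\tau))\,\tau^{N-1}\,d\tau,\qquad P(\tau):=\rho(\tau^N),\ M:=|\Omega|^{1/N},
\end{equation*}
which holds because $\Phi\in\Delta_2$ and because $P_\Phi<N$ is exactly the threshold for the corresponding Muckenhoupt-type condition to be finite (for $\Phi=A_p$ the condition is the convergence of $\int_1^{\infty}t^{1/N-1-1/p}\,dt$, i.e. $p<N$). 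Chaining the three displayed estimates gives $\int_{\Omega}|g|\Phi(|u|)\,dx\le C\|g\|_{L^{\Phi,\infty}(\Omega)}\int_{\Omega}\Phi(|\nabla u|)\,dx$.

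The step I expect to be the main obstacle is the last one: proving the power-weighted \emph{modular} Orlicz--Hardy inequality with a constant depending only on $N$ and $\Phi$. Unlike the power case, the modular inequality is strictly stronger than the norm inequality for the Hardy operator, so one cannot simply invoke $L^\Phi$-boundedness of $S$; one has to run a distributional/good-$\lambda$ argument (or cite the relevant Orlicz--Hardy theory), carefully tracking that $P_\Phi<N$ — and not merely the $\Delta_2$-finiteness of $P_\Phi$ — is what makes the weighted condition hold, and using $\Delta_2$ a second time to absorb the constant $c_0$. A minor technical point is upgrading the $\nabla^\prime$ inequality from large arguments to all arguments, again via $\Delta_2$.
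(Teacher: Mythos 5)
Your opening moves coincide with the paper's: Hardy--Littlewood plus $g^*\le g^{**}$ and the definition of $\|g\|_{L^{\Phi,\infty}(\Omega)}$, then the supermultiplicativity coming from $\tilde\Phi\in\Delta^\prime$ (this is exactly \eqref{eqn-7} applied to $\tilde\Phi$, and in this paper $\Delta^\prime$ is a global condition, so your worry about upgrading from large arguments is moot) to reduce everything to the estimate $\int_0^{|\Omega|}\Phi\big(s^{-1/N}u^*(s)\big)\,ds\le C\int_\Omega\Phi(|\nabla u|)\,dx$. Your observation that $P_\Phi<N$ forces \eqref{H1} is also correct. Up to this point the argument is sound and is the same as the paper's.

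The genuine gap is the step you yourself flag: the power-weighted modular Orlicz--Hardy inequality after P\'olya--Szeg\H{o} and the change of variables is asserted, not proved. Saying that ``$P_\Phi<N$ is exactly the threshold for the Muckenhoupt-type condition'' is a heuristic, not an argument, and the route you gesture at does not go through as stated: the Muckenhoupt-type characterizations available here (Proposition \ref{mucken2} / Lemma \ref{mucken1}) treat inequalities of the form $\int\Phi\big(|\int_t^b f|\big)w\,dt\le C\int\Phi(|f|)v\,dt$ with the weights \emph{outside} $\Phi$, whereas in your reduced inequality the factor $\sigma^{-1}$ (equivalently $s^{-1/N}$) sits \emph{inside} $\Phi$; extracting it as an external weight would require submultiplicativity $\Phi(st)\le C\Phi(s)\Phi(t)$, i.e.\ $\Phi\in\Delta^\prime$, which is not among the hypotheses of this theorem (and is precisely the extra assumption of Theorem \ref{cor}, whose proof does use Lai's criterion). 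The paper closes exactly this gap differently, in Proposition \ref{prop-1.3}: from \eqref{eqn-2} one bounds the upper Matuszewska--Orlicz index, $I(\Phi)\le P_\Phi<N$, and then invokes Cianchi's modular rearrangement estimate (\cite[Remark 1.2 and Proposition 5.2(I)]{C}), which directly yields $\int_0^{|\Omega|}\Phi\big(s^{-1/N}u^*(s)\big)\,ds\le\int_\Omega\Phi(C|\nabla u|)\,dx$. So the conclusion you are aiming at is true under your hypotheses, but to complete your proof you must either reproduce such an index-based argument (or an equivalent precise citation) for the modular Hardy step, or find a way around the weight-inside-$\Phi$ obstruction; as written, the decisive inequality rests on an unproven claim.
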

    \begin{rem}
The case $p^+_\Phi\geq N$ is open for general $\Phi$. However, we have some results for this case when $\Phi=A_p$ (see Remark \ref{exmp5.1}).
        \end{rem}
For the second method, we associate two functions $Q_\Phi$ and $\eta_\Phi$ to a Young function $\Phi$ as below:
 \begin{equation}\label{eqA*}
     Q_\Phi(s)=\Phi\left(\zeta(s)\right)\tilde{\Phi}\left(\frac{1}{\Phi(\zeta(s))}\right),\;\;\;s>0,\,\,\text{where}\,\zeta(s)=s^{\frac{1}{N}-1},
 \end{equation} 
 \begin{equation}\label{eta-Phi}
     \eta_\Phi(r)=r\varphi\left( \int_r^{|\Omega|}\frac{1}{Q_\Phi(s)}ds\right),\;\;\;r\in (0,|\Omega|).
 \end{equation}
For  $\Phi$ satisfying  the following condition
\begin{equation*}
    \tag{H2}\label{8} \lim_{r\to 0}\eta_\Phi(r)<\infty,
\end{equation*}
and $g\in \mathcal{M}(\Omega)$, we define   
\begin{equation*}
\|g\|_{X_\Phi(\Omega)}=\sup_{0<r<|\Omega|}\left\{g^{**}(r)\eta_\Phi(r)\right\}.
  \end{equation*}
  Now, we consider the following function space:
\begin{equation}\label{Phi-Phi}
  X_{\Phi}(\Omega)=\left\{g\in \mathcal{M}(\Omega) :\|g\|_{X_{\Phi}(\Omega)}<\infty\right\}.
\end{equation}
 It can be verified that $X_{\Phi}(\Omega)$ is a rearrangement-invariant Banach function space with respect to $\|g\|_{X_\Phi(\Omega)}$ (see $(ii)$ of  Remark \ref{BF-S}). 
\begin{thm}\label{cor}
  Let $\Omega$ be an open subset of $\mathbb{R}^N$. Let $\Phi$ be a Young function satisfying \eqref{8} and $\Phi\in \Delta^\prime$,   $\tilde{\Phi}\in\Delta_2$.
If $g\in X_\Phi(\Omega)$, then there exists $C=C(N,\Phi)>0$ so that
 \begin{equation}\label{result-1.6}
   \int_{\Omega}|g(x)|\,\Phi(|u(x)|)dx\leq C \|g\|_{X_\Phi(\Omega)}\int_{\Omega}\Phi(|\nabla u(x)|)dx,\;\;\; \forall\, u\in \mathcal{C}^1_c(\Omega).
  \end{equation}
\end{thm}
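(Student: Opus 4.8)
The plan is to follow the same blueprint that underlies Theorems \ref{thm:3} and \ref{larentzthm}: reduce the modular inequality on $\mathcal{C}_c^1(\Omega)$ to a one-dimensional weighted inequality for decreasing rearrangements via Cianchi's P\'olya--Szeg\"o type embedding, and then verify the one-dimensional inequality using the defining norm of $X_\Phi(\Omega)$. Concretely, first I would fix $u\in\mathcal{C}_c^1(\Omega)$ and, using the standard Schwarz symmetrization together with Cianchi's rearrangement estimate for the $\Phi$-gradient (the tool behind \cite{cianchi1996}), pass to the decreasing rearrangement $u^*$ on $(0,|\Omega|)$. The key pointwise bound to extract is one of the form
\begin{equation*}
    u^*(r)\leq C\int_r^{|\Omega|}\frac{1}{G_\Phi(s)}\,\Phi^{-1}\!\left(\int_\Omega\Phi(|\nabla u|)\,dx\right)^{?}\cdots
\end{equation*}
—more precisely, the familiar chain $u^*(r)\le \int_r^{|\Omega|}(-u^*)'(s)\,ds$ estimated through the weighted isoperimetric inequality, which after the change of variables involving $\zeta(s)=s^{1/N-1}$ produces exactly the weight $G_\Phi$ appearing in \eqref{eqA*}. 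This is where the hypothesis $\Phi\in\Delta'$ is used, to linearize $\Phi$ of a sum/product and to control $\Phi\big(\int\cdots\big)$ by $\int\Phi(\cdots)$ up to constants.

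Second, having controlled $u^*$, I would estimate the left-hand side by Hardy's inequality / the Hardy--Littlewood inequality:
\begin{equation*}
    \int_\Omega |g|\,\Phi(|u|)\,dx \leq \int_0^{|\Omega|} g^*(r)\,\Phi\big(u^*(r)\big)\,dr \leq \int_0^{|\Omega|} g^{**}(r)\,\big(\Phi(u^*)\big)'(r)\cdot(\text{something})\,dr,
\end{equation*}
i.e.\ I would integrate by parts to bring in $g^{**}$ rather than $g^*$, which is legitimate because $g^{**}$ dominates $g^*$ in the Hardy--Littlewood--P\'olya order and $\Phi(u^*)$ is decreasing. Then I would insert the pointwise bound for $u^*$, use $\Delta'$ on $\Phi$ once more, and recognize the factor $r\,\varphi\big(\int_r^{|\Omega|}G_\Phi^{-1}\big)=\eta_\Phi(r)$ from \eqref{eta-Phi}. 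At that point the integral is bounded by $\sup_r\{g^{**}(r)\eta_\Phi(r)\}=\|g\|_{X_\Phi(\Omega)}$ times $\int_\Omega\Phi(|\nabla u|)\,dx$, which is the claim. The role of \eqref{8} is precisely to guarantee that $\eta_\Phi$ does not blow up as $r\to0^+$, so that the supremum defining $\|g\|_{X_\Phi}$ is finite on a nontrivial space and the boundary term in the integration by parts vanishes; the role of $\tilde\Phi\in\Delta_2$ is to make $X_\Phi(\Omega)$ a genuine rearrangement-invariant Banach function space (Remark \ref{BF-S}) and to handle the complementary-function estimates when applying Young's inequality $ab\le \Phi(a)+\tilde\Phi(b)$ inside the computation of $G_\Phi$.

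The main obstacle I anticipate is the bookkeeping in the integration-by-parts / Hardy step: one must choose the right ``splitting'' of $\Phi(u^*(r))$ so that after substituting the integral bound for $u^*$ and differentiating, the weight that pairs with $g^{**}$ is exactly $\eta_\Phi$ and not some larger quantity. This requires using the $\Delta'$ condition on $\Phi$ in the form $\Phi(ab)\le \Phi(C a)\Phi(C b)$ (or the submultiplicativity consequence) at the correct moment, and carefully tracking that $\varphi=\Phi'$ appears rather than $\Phi$ itself — the presence of $\varphi$ in \eqref{eta-Phi} is the signature of having differentiated $\Phi(u^*)$ via the chain rule. A secondary technical point is justifying all the rearrangement manipulations (absolute continuity of $u^*$, validity of the coarea/isoperimetric step) for general open $\Omega$ of possibly infinite measure; here one argues first for $u$ with compact support so that only the sublevel sets of finite measure matter, exactly as in the proof of Theorem \ref{thm:3}. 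Once the pointwise rearrangement estimate is in hand, the remainder is a direct, if delicate, one-variable computation.
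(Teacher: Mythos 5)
Your outer reduction is the same as the paper's: Hardy--Littlewood to replace $\int_\Omega |g|\Phi(|u|)$ by $\int_0^{|\Omega|}g^*\Phi(u^*)$, P\'olya--Szeg\"o to control $\int_0^{|\Omega|}\Phi\bigl(N\omega_N^{1/N}s^{1-1/N}(-du^*/ds)\bigr)ds$ by $\int_\Omega\Phi(|\nabla u|)$, and the $\Delta'$ condition to split $\Phi\bigl(s^{1-1/N}(-u^{*\prime})\bigr)$ into $\Phi(\zeta(s))^{-1}$ times the gradient term. But the heart of the theorem is the one-dimensional inequality
\begin{equation*}
\int_0^{|\Omega|}\Phi\Bigl(\int_t^{|\Omega|}f(s)\,ds\Bigr)g^*(t)\,dt\;\le\;B\int_0^{|\Omega|}\Phi(|f(t)|)\,\frac{dt}{\Phi(\zeta(t))},\qquad f=-\frac{du^*}{ds},
\end{equation*}
and your plan to obtain it by ``a pointwise bound $u^*(r)\lesssim\int_r^{|\Omega|}G_\Phi^{-1}\cdots$, then integrate by parts so that $g^{**}$ pairs with exactly $\eta_\Phi$'' does not close. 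After the legitimate Fubini/chain-rule step one is left with $\int_0^{|\Omega|} s\,g^{**}(s)\,\varphi(u^*(s))\,(-u^{*\prime}(s))\,ds$, and bounding $s\,g^{**}(s)\le\|g\|_{X_\Phi}/\varphi\bigl(\int_s^{|\Omega|}G_\Phi^{-1}\bigr)$ still leaves the ratio $\varphi(u^*(s))/\varphi\bigl(\int_s^{|\Omega|}G_\Phi^{-1}\bigr)$, which is \emph{not} controlled pointwise: there is no pointwise domination of $u^*$ by $\int_s^{|\Omega|}G_\Phi^{-1}$ independent of $u$ (any such bound reintroduces $\|\nabla u\|_{L^\Phi}$ via an Orlicz--H\"older estimate, i.e.\ the norm-versus-modular mismatch, and leads to $\eta_{\Phi,\Phi}$ of \eqref{eta-phi,psi} rather than $\eta_\Phi$). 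Absorbing the leftover term back into $\int\Phi(s^{1-1/N}(-u^{*\prime}))ds$ is precisely the non-trivial content of the Muckenhoupt-type sufficiency theorem; already for $\Phi=A_p$ this requires Muckenhoupt's auxiliary-function argument, not a Fubini plus a pointwise insertion. The paper supplies this step by Lemma \ref{mucken1}, a $\Phi=\Psi$ variant of Lai's criterion (Proposition \ref{mucken2}), and the actual work of the proof is verifying Lai's condition \eqref{eq5} \emph{uniformly in the auxiliary parameter} $\epsilon$ for $w=g^*$, $v=1/\Phi(\zeta)$, via the inequalities $\varphi^{-1}(\Phi(\zeta)/\epsilon)\le\varphi^{-1}(C/\epsilon)/\varphi^{-1}(1/\Phi(\zeta))$ and $G_\Phi\le\varphi^{-1}(1/\Phi(\zeta))$; none of this appears in your sketch.

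Two further misattributions are symptomatic of the gap. The weight $G_\Phi(s)=\Phi(\zeta(s))\tilde\Phi\bigl(1/\Phi(\zeta(s))\bigr)$ does not arise from a coarea/isoperimetric computation; it arises from the complementary-function estimate $\tilde\Phi(t)\le t\varphi^{-1}(t)$ inside the Muckenhoupt verification. And $\tilde\Phi\in\Delta_2$ is not needed to make $X_\Phi(\Omega)$ a Banach function space (Remark \ref{BF-S}(ii) needs only \eqref{8}); it is used in Lemma \ref{mucken1} to convert Lai's Luxemburg-norm condition \eqref{eq19} into the $\varphi$-form condition \eqref{eq5}, via $\tilde\Phi(t)\le t\varphi^{-1}(t)\le C\tilde\Phi(t)$. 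Also note that ``controlling $\Phi(\int\cdots)$ by $\int\Phi(\cdots)$'' is false in general (Jensen goes the other way) and is not what $\Delta'$ provides. To repair the proposal you would either have to quote and verify the Muckenhoupt-type criterion as the paper does, or give an independent proof of the one-dimensional Orlicz--Hardy inequality, which your current sketch does not contain.
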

Next, we consider the case when $\Phi$ and $\Psi$ are not necessarily equal. Towards this, first we consider two cases: $(i)$ when $\Phi$ dominates $\Psi$ globally, $(ii)$ when $\Phi$ dominates $\Psi$ near infinity and  $\Omega$ is bounded.
We say {\it{$\Phi$ dominates $\Psi$ globally}}  if
there exists a constant $C>0$ such that for all $t\geq 0,$
\begin{equation*}
    \Psi(t)\leq \Phi(Ct).
 \end{equation*}
 Similarly, we say {\it{$\Phi$ dominates $\Psi$ near infinity}
 } if there exists $t_0>0$ such that the above inequality holds for all $t\geq t_0.$ 
 Now, we state our result.
 \begin{thm}\label{thm-sim}
Let   $\Omega$ be an open subset of $\mathbb{R}^N$, and $g\in L^1_\mathrm{loc}(\Omega)$ satisfies  \eqref{modulat phi} with $\Phi\in \Delta_2$ and  
 $\tilde{\Phi},\Psi\in \Delta^\prime$. In addition, if $\Phi$ and $\Psi$ satisfy one of the following conditions:
     \begin{enumerate}[(i)]
         \item  $\Phi$ dominates $\Psi$ globally,
         \item $\Phi$ dominates $\Psi$ near infinity and $\Omega$ is bounded,
     \end{enumerate} 
then $g$ satisfies \eqref{modular}.
 \end{thm}
In the next theorem, we consider the case when $\Psi$ is not necessarily dominated by $\Phi$. Recall that, if  $|\Omega|<\infty$ and $N<p$, then by Sobolev inequality for $g\in L^1(\Omega)$ and $q\in (1,\infty)$ we have
$$  \left(\int_{\Omega}|g(x)||u(x)|^qdx\right)^\frac{1}{q}\leq C\|g\|^\frac{1}{q}_{L^1(\Omega)}
   \left(\int_{\Omega}|\nabla u(x)|^pdx\right)^\frac{1}{p},\;\;\; \forall \,u\in \mathcal{C}^1_c(\Omega),$$ for some  $C>0$. For an analogous result with more general $\Phi$, we introduce the following condition:
\begin{equation*}
\tag{H3}\label{eqG1}
\int_{1}^{\infty}\left(\frac{s}{\Phi(s)}\right)^{\frac{1}{N-1}} ds < \infty.
  \end{equation*}
A Young function $\Phi$ with $p^-_\Phi>N$ satisfies \eqref{eqG1} (see \eqref{eqn-delta2-2}). Also,  $\Phi=A_p$ satisfies \eqref{eqG1} if and only if $p>N$.  Indeed, the condition \eqref{eqG1} plays the role of ``the dimension restriction ($N<p$) 
in Sobolev inequalities'' for general $\Phi$, see (\cite[Theorem 1a]{Cianchi2-1996}).  
\begin{thm}\label{thm-bdd}
 Let $\Omega$ be an open subset of $\mathbb{R}^N,$ and  $\Phi,\Psi$ be Young functions such that  
 $\Phi,\,\Psi\in\Delta_2$, $\tilde{\Phi} \in\Delta^\prime,$ and $\Phi$ satisfies \eqref{eqG1}. Assume that $\Phi$ satisfies \eqref{H1} when $|\Omega|=\infty$. If $g \in L^1(\Omega)$, then there exists  $C=C(N,\Phi,\Psi)>0$ so that
  \begin{equation*}
\Psi^{-1}\left(\int_{\Omega}|g(x)|\,\Psi(|u(x)|)dx\right)\leq C\max\left\{\|g\|^{1/p^-_\Psi}_{L^1(\Omega)},\|g\|^{1/p^+_\Psi}_{L^1(\Omega)}\right\}
     \Phi^{-1}\left(\int_{\Omega}\Phi(|\nabla u(x)|)dx\right),
  \end{equation*}
for all $u\in \mathcal{C}^1_c(\Omega)$.
\end{thm} 
Our next theorem provides a more general Orlicz-Lorentz type admissible space for $g$ so that \eqref{modular} holds. For $\Phi$ and $\Psi$, we define 
\begin{equation}\label{eta-phi,psi}
    \eta_{\Phi,\Psi}(r)=r\Psi\left( \|\zeta\|_{L^{\tilde{\Phi}}((r,\,|\Omega|))}\right),\;\;\; r\in (0,|\Omega|),
\end{equation}
  where $\zeta(s)=s^{\frac{1}{N}-1}$ and $\|\cdot\|_{L^{\tilde{\Phi}}((r,\,|\Omega|))}$ is the Luxemburg norm (see \eqref{defn-Luxemberg}). Assume that 
  \begin{equation*}
    \tag{H4}\label{7}  \lim_{r\to 0}\eta_{\Phi,\Psi}(r)<\infty.
  \end{equation*}
  Now for $g\in \mathcal{M}(\Omega),$ define 
 \begin{equation*}
      \|g\|_{X_{\Phi,\Psi}(\Omega)}=\sup_{0<r<|\Omega|}\left\{g^{**}(r)\eta_{\Phi,\Psi}(r)\right\},
  \end{equation*}
\begin{equation}\label{Phi-Psi}
       X_{\Phi,\Psi}(\Omega)=\left\{g\in \mathcal{M}(\Omega):\|g\|_{ X_{\Phi,\Psi}(\Omega)}<\infty\right\}.
    \end{equation}
One can verify that $X_{\Phi,\Psi}(\Omega)$ is a rearrangement-invariant Banach function space  with respect to $\|g\|_{ X_{\Phi,\Psi}(\Omega)}$ (see $(iii)$ of Remark \ref{BF-S}). To state our next result, we define the notion of {\it{super-additivity}}. 
\begin{defn}\label{dfn1}
A function $f:[0,\infty)\rightarrow [0,\infty]$ is said to be  super-additive 
if there exists a constant $C>0$ such that $$\sum_{i=1}^{\infty} f(a_i)\leq C \, f\left(\sum_{i=1}^\infty a_i\right),$$ for every summable sequence $(a_i)$ in $[0,\infty).$
\end{defn}
\noindent Note that, any convex function  $f:[0,\infty)\rightarrow [0,\infty]$ 
satisfies the above inequality with $C=1$ if $f(0)=0$.
 In particular, $A_q\circ A_p^{-1}$ is super-additive when $q\geq p$. Now we state our result:
\begin{thm}\label{theorem3} Let $\Omega$ be an open subset of $\mathbb{R}^N.$ Let $\Phi$ and $\Psi$  be Young functions satisfying \eqref{7} and   $\Phi,\tilde{\Phi},\Psi\in\Delta^\prime$, and $\Psi\circ \Phi^{-1}$ be super-additive. If $g \in  X_{\Phi,\Psi}(\Omega)$, then there exists $C=C(N,\Phi,\Psi)>0$ so that
  \begin{equation}\label{eqn-thm3}
\Psi^{-1}\left(\int_{\Omega}|g(x)|\,\Psi(|u(x)|)dx\right)\leq C B \Phi^{-1}\left(\int_{\Omega}\Phi(|\nabla u(x)|)dx\right),\;\;\; \forall\,u\in \mathcal{C}^1_c(\Omega),
  \end{equation}
  where $B=\max\left\{\|g\|^{1/p^-_\Psi}_{X_{\Phi,\Psi}(\Omega)},\|g\|_{X_{\Phi,\Psi}(\Omega)}^{1/p^+_\Psi}\right\}$.
  \end{thm}
Our following result provides a necessary and sufficient condition (analogous to {\it{Ma\`zya's capacity}} condition, see \cite[Theorem 8.5]{Mazya20002}) on $g$ so that \eqref{modular} holds.
\begin{defn}[\textbf{Orlicz-Sobolev capacity}]
Let $\Omega$ be an open subset of $\mathbb{R}^N$ and $K$ be a compact subset of $ \Omega$. Then the {\it{$\Phi$-capacity}} of $K$ with respect to $\Omega$ is defined as $$\text{Cap}_{\Phi}(K,\Omega)=\inf\left\{   \int_{\Omega}\Phi(|\nabla u|)dx: u\in \mathcal{C}_c^1(\Omega),\ u\geq 1\  \ on\; K \ \right\}.$$
We refer to  \cite{salort2021,Mazya1985, Rudd2005} for further information on the $\Phi$-capacity.
\end{defn}
\noindent For $\Phi=A_p$ and $\Psi=A_q$, using $\Phi$-capacity, Ma\`zya has provided a necessary and sufficient condition \cite[Theorem 8.5]{Mazya20002} on $g$ so that \eqref{modular} holds.
 He proved that for $1<p\leq q<\infty$, $g$ satisfies \eqref{modular} if and only if 
 there exists a constant $D>0$ such that, for every compact subset $K$ of $\Omega$, $$\int_K|g(x)|dx\leq D\left(\text{Cap}_{\Phi}(K,\Omega)\right)^\frac{q}{p}.$$
The following result is a generalization in terms of general Young functions.
\begin{thm}\label{capacity}
 Let $\Omega $ be an open subset of $\mathbb{R}^N$ and $g\in L^1_{\mathrm{loc}}(\Omega)$. Let $\Phi$ and $\Psi$ be Young functions such that $\Psi\circ\Phi^{-1}$ is super-additive, $\Psi\in \Delta_2,$ and  $\Phi,\tilde{\Phi},\tilde{\Psi}\in\Delta^\prime$. Then the following two conditions are equivalent:
 \begin{enumerate}[(i)]
     \item there exists a constant $C>0$ such that \eqref{modular}
 holds.
 \item there exists a constant $D>0$ such that, for every compact subset $K$ of $\Omega$, 
 \begin{equation*}
   \int_K|g(x)|dx\leq D\,\Psi\circ \Phi^{-1}\left(\mathrm{Cap}_{\Phi}(K,\Omega)\right).  
 \end{equation*}
 \end{enumerate}
Furthermore, for the best constants $C$ and $D$, there exists a constant $C_1=C_1(\Phi,\Psi)>0$  such that $\Psi(CC_1)\leq D\leq \max\{C^{p^-_\Psi},C^{p^+_\Psi}\}/\Psi(1)$.
\end{thm}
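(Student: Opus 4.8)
The plan is to establish $(a)\Rightarrow(b)$ directly from \eqref{modular} and the definition of $\mathrm{Cap}_\Phi$, and $(b)\Rightarrow(a)$ by a dyadic truncation argument of Maz'ya type, in which the homogeneity of $t\mapsto t^p$ is replaced by the hypotheses $\Phi,\tilde\Phi\in\Delta'$ and $\Phi<<\Psi$. For $(a)\Rightarrow(b)$: fix a compact $K\subset\Omega$ and let $u\in\mathcal C^1_c(\Omega)$ be admissible for $\mathrm{Cap}_\Phi(K,\Omega)$; after a standard truncation one may assume $0\le u\le 1$ with $u\equiv1$ on $K$ (testing with Lipschitz or $W^{1,\Phi}_0$ competitors does not change the capacity). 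Then $\Psi(1)\int_K|g|\le\int_\Omega|g|\Psi(|u|)$, so \eqref{modular}, monotonicity, and passing to the infimum over $u$ give $\Psi(1)\int_K|g|\le\Psi\!\bigl(C\,\Phi^{-1}(\mathrm{Cap}_\Phi(K,\Omega))\bigr)$. Since $\Psi(Ct)\le\max\{C,C^{P_\Psi}\}\,\Psi(t)$ (convexity of $\Psi$ for $C<1$ and the definition \eqref{p-Psi} of $P_\Psi$ for $C\ge1$), this is $(b)$ with $D\lesssim\max\{C,C^{P_\Psi}\}$; this half uses only $\Psi\in\Delta_2$.

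For $(b)\Rightarrow(a)$: fix $\lambda>1$ and $u\in\mathcal C^1_c(\Omega)$, and replace $u$ by $|u|$. Put $A_k=\{u>\lambda^k\}$ and $S_k=\{\lambda^k<u<\lambda^{k+1}\}$; each $\{u\ge\lambda^k\}$ is a compact subset of $\Omega$, so $(b)$ applies to it. From the telescoping inequality $\Psi(u)\le\sum_k\bigl(\Psi(\lambda^{k+1})-\Psi(\lambda^k)\bigr)\mathbf 1_{A_k}$ and Tonelli,
\[
\int_\Omega|g|\Psi(u)\,dx\le\sum_k\bigl(\Psi(\lambda^{k+1})-\Psi(\lambda^k)\bigr)\int_{A_k}|g|\,dx .
\]
By $(b)$, $\int_{A_k}|g|\le D\,\Psi\!\circ\!\Phi^{-1}\!\bigl(\mathrm{Cap}_\Phi(\{u\ge\lambda^k\},\Omega)\bigr)$, and the capacity is bounded above by testing with the Lipschitz function $v_{k-1}=\min\bigl\{1,(u-\lambda^{k-1})^+/(\lambda^{k-1}(\lambda-1))\bigr\}$, whose gradient equals $\nabla u/(\lambda^{k-1}(\lambda-1))$ on $S_{k-1}$ and vanishes elsewhere. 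Writing $\gamma_j=\int_{S_j}\Phi(|\nabla u|)\,dx$, the crucial step is that $\Phi\in\Delta'$ lets one pull the dilation $1/(\lambda^{k-1}(\lambda-1))$ out of $\Phi$, while $\tilde\Phi\in\Delta'$ and the concavity of $\Phi^{-1}$ let one absorb it back after applying $\Phi^{-1}$, yielding
\[
\lambda^{k+1}\,\Phi^{-1}\!\bigl(\mathrm{Cap}_\Phi(\{u\ge\lambda^k\},\Omega)\bigr)\le C_\Phi\,\Phi^{-1}(\gamma_{k-1}),
\]
with $C_\Phi=C_\Phi(\Phi,\lambda)$ independent of $k$ — this is where the dyadic scale factors cancel exactly. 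Then $\tilde\Psi\in\Delta'$ (in the form $\Psi(a)\Psi(b)\lesssim\Psi(ab)$) together with $\Psi\in\Delta_2$ gives
\[
\bigl(\Psi(\lambda^{k+1})-\Psi(\lambda^k)\bigr)\,\Psi\!\circ\!\Phi^{-1}\!\bigl(\mathrm{Cap}_\Phi(\{u\ge\lambda^k\},\Omega)\bigr)\lesssim\Psi\!\circ\!\Phi^{-1}(\gamma_{k-1})
\]
uniformly in $k$. Summing in $k$, invoking $\Phi<<\Psi$ (Definition \ref{dfn1}) with $a_j=\gamma_j$, and using $\sum_j\gamma_j=\int_\Omega\Phi(|\nabla u|)\,dx$ (as $|\nabla u|=0$ a.e.\ on every level set of $u$), one gets $\int_\Omega|g|\Psi(u)\,dx\le C_1 D\,\Psi\!\circ\!\Phi^{-1}\!\bigl(\int_\Omega\Phi(|\nabla u|)\,dx\bigr)$ with $C_1=C_1(\Phi,\Psi)$. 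Applying $\Psi^{-1}$, and using $\tilde\Psi\in\Delta'$ once more (now as $\Psi^{-1}(c^{-1}xy)\le\Psi^{-1}(x)\Psi^{-1}(y)$) and the concavity of $\Psi^{-1}$, converts this into \eqref{modular} with $C\le C_1\Psi^{-1}(D)$, i.e.\ $\Psi(C/C_1)\le D$; combined with the bound $D\lesssim\max\{C,C^{P_\Psi}\}$ of the first half, this gives the stated two-sided estimate for the best constants.

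The main obstacle is the truncation step in $(b)\Rightarrow(a)$: the competitor $v_{k-1}$ necessarily carries a gradient rescaled by a power of $\lambda$, and a non-homogeneous $\Phi$ cannot be pushed through dilations as $t^p$ can. Exploiting $\Phi\in\Delta'$ and $\tilde\Phi\in\Delta'$ to transfer this dilation through $\Phi$ and $\Phi^{-1}$ up to multiplicative constants, so that the scale factors cancel the weights $\Psi(\lambda^{k+1})-\Psi(\lambda^k)$ uniformly in $k$, and then using $\Phi<<\Psi$ to reassemble the dyadic shell contributions into the single modular integral $\int_\Omega\Phi(|\nabla u|)\,dx$, is the heart of the argument. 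The approximation of the Lipschitz competitors $v_{k-1}$ by $\mathcal C^1_c(\Omega)$ functions and the comparability of the standard variants of the Orlicz-Sobolev capacity are routine.
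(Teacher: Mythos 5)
Your proposal is correct and follows essentially the same route as the paper: the easy direction by testing \eqref{modular} against capacity competitors, and the converse by a Maz'ya-type dyadic level-set decomposition, capacity bounds via rescaled truncations of $u$, the $\Delta'$ conditions on $\Phi,\tilde\Phi,\tilde\Psi$ as substitutes for homogeneity, and $\Phi<<\Psi$ to reassemble the shell contributions. The differences (general base $\lambda$ instead of $2$, Lipschitz truncations to be smoothed rather than the paper's smooth cutoff $\alpha(|u|/2^k-1)$, and pulling the dilation through $\Phi$ and $\Phi^{-1}$ separately rather than via the paper's Lemma \ref{lemcap}) are only cosmetic.
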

\begin{rem}\label{cap-rem}
  Motivated by \cite{Ujjal2021}, for $g\in L^1_\text{loc}(\Omega)$, one can define $$\|g\|_{\mathcal{H}_{\Phi,\Psi}(\Omega)}=\sup\left\{\frac{\int_{K}|g(x)|dx}{\Psi\circ \Phi^{-1}\left(\text{Cap}_{\Phi}(K,\Omega)\right)}:K \,\text{is a 
compact subset  of} \,\, \Omega\right\}.$$
It is easy to see that ${\mathcal{H}_{\Phi,\Psi}(\Omega)}=\{g\in L^1_\text{loc}(\Omega): \|g\|_{\mathcal{H}_{\Phi,\Psi}(\Omega)}<\infty\}$ and $\|g\|_{\mathcal{H}_{\Phi,\Psi}(\Omega)}$ defines a Banach function norm on ${\mathcal{H}_{\Phi,\Psi}(\Omega)}$. 
\end{rem}

Next, as an application of weighted Orlicz-Sobolev inequalities, we study the following weighted eigenvalue problem:
\begin{equation}\label{eqJ}
  -\Delta_{\varphi}u=\lambda g(x)\psi(|u|)\frac{u}{|u|},\;\;\; u\in \mathcal{D}^{1,\Phi}_0(\Omega)\setminus\{0\},
\end{equation} 
where $g\geq 0,$ 
$\varphi=\Phi^\prime$, $\psi=\Psi^\prime$, and $\Delta_{\varphi}$ is the $\Phi$-Laplacian operator defined as $\Delta_{\varphi}u= \text{div} \left(\varphi(|\nabla u|)\frac{\nabla u}{|\nabla u|}\right)$.
We say that $\lambda$ is an eigenvalue of \eqref{eqJ} if there exists  $u\in \mathcal{D}_0^{1,\Phi}(\Omega)\setminus\{0\}$ so that 
\begin{equation*}\label{eqn1.18}
    \int_{\Omega}\varphi(|\nabla u|)\frac{\nabla u}{|\nabla u|}\cdot\nabla v\, dx=\lambda\int_{\Omega}g\psi(|u|)\frac{uv}{|u|}dx,\;\;\;\forall\, v\in \mathcal{D}^{1,\Phi}_0(\Omega).
\end{equation*}
We call $u$ is an eigenfunction corresponding to $\lambda$. 
For $\Phi=A_p$ and $\Psi=A_q$, \eqref{eqJ} reduce to 
\begin{equation}\label{eqnJ*}
     -\text{div}(|\nabla u|^{p-2}\nabla u)=  \frac{\lambda q}{p} g(x)|u|^{q-2}u.
 \end{equation}
The existence of eigenvalues of \eqref{eqnJ*} was studied in \cite{Anane1987, Anoop2021, Anoop2015} 
and the references therein.

For the existence of the eigenvalues of \eqref{eqJ}, we consider the functionals $J_\Phi,G_\Psi:\mathcal{D}_{0}^{1,\Phi}(\Omega)\rightarrow \mathbb{R}$  defined as
 \begin{equation}\label{eqn-G-Psi}
     J_\Phi(u)=\int_{\Omega}\Phi(|\nabla u|) dx,\;\;\;  G_{\Psi}(u)=\int_{\Omega}g\Psi(|u|)dx,\;\;\; u\in\mathcal{D}_{0}^{1,\Phi}(\Omega).
 \end{equation}
Given $g\geq 0$ and $r>0$, we define
\begin{equation}\label{minimizer-attain}
  \lambda_1(r)=\inf\left\{J_\Phi(u):u\in N_r\right\},\,\,\text{where}\,\, N_r=\left\{u\in \mathcal{D}^{1,\Phi}_0(\Omega):G_\Psi(u)=r\right\}.
\end{equation}
Considering $\Phi,\Psi\in\Delta_2$, and $g$ satisfies \eqref{modular}, one can verify that  $J_\Phi$ and $G_\Psi$ are  Fréchet  derivable (see \cite[Proposition 2.17]{salort2021}, \cite[Lemma A.3]{Fukagai2006}) with derivatives 
given by $$\langle J_\Phi^{\prime}(u), v \rangle=\int_{\Omega}\varphi(|\nabla u|)\frac{\nabla u}{|\nabla u|}\cdot\nabla v\, dx, \;\;\;\;\langle G_\Psi^{\prime}(u), v\rangle=\int_{\Omega}g\psi(|u|)\frac{uv}{|u|}dx.$$
Now, if $\lambda_1(r)$ is attained for some $u\in N_r$ and $G_\Psi^\prime(u)\neq 0$, then by the {\it{Lagrange multipliers theorem}} (see \cite[Theorem 4]{Browder1965}, Lemma \ref{lagrange}), there exists $\lambda=\tilde{\lambda}_1(r)>0$ such that $u$ solves 
  $$\langle J_\Phi^{\prime}(u), v\rangle=\tilde{\lambda}_1(r)\langle G_\Psi^{\prime}(u), v\rangle,\;\;\;\forall\, v\in \mathcal{D}^{1,\Phi}_0(\Omega).$$ 
 Thus $\lambda=\tilde{\lambda}_1(r)$ is an eigenvalue of \eqref{eqJ} with eigenfunction $u\in N_r$. For $\Phi=A_p$ and $\Psi=A_q$, using the homogeneity of $J_\Phi$ and $G_\Psi$, it is easy to verify that 
 \begin{equation*}
     \lambda_1(r)=\frac{qr}{p}\tilde{\lambda}_1(r)=r^\frac{p}{q}\lambda_1(1),\;\;\; \forall\,r>0.
 \end{equation*}
Hence, it is enough to consider \eqref{minimizer-attain} only at the level $r=1$ in this case. However, to deal with general $\Phi$ and $\Psi$, due to the lack of homogeneity of $J_\Phi$ and $G_\Psi$, we require to consider \eqref{minimizer-attain} at each level $r>0$.  

One of the sufficient conditions that ensure the existence of a minimizer for \eqref{minimizer-attain} is the compactness of $G_\Psi$ in $\mathcal{D}_0^{1,\Phi}(\Omega)$ (i.e., $u_n\rightharpoonup u$ weakly in $\mathcal{D}_0^{1,\Phi}(\Omega)$ implies $G_\Psi(u_n)\to G_\Psi(u)$). Many authors proved the compactness of the  map $G_\Psi$ in $\mathcal{D}_0^{1,\Phi}(\Omega)$
under various assumptions on $\Phi$, $\Psi$, and $g$. For bounded $\Omega$ and $\Phi=\Psi$, see \cite[$g\equiv 1$]{Vesa1999}, \cite[$g\in L^\infty(\Omega)$]{Gossez2002}. For $\Omega=\mathbb{R}^N$, $\Phi=\Psi, \,N>p^+_\Phi$, and $g\in L^r(\Omega)\cap L^\infty(\Omega)$ where $r=r(\Phi)>0$, see \cite{Waldo2017,Arriagada2019}.
For bounded $\Omega$ and  $\Psi\prec\prec\Phi_*$ (where
$\Phi_*$ is the {\it{Sobolev conjugate}} of $\Phi$ \cite[Page 248]{Adams1975}), see \cite[$g\equiv 1$]{Garc1999}, and \cite[$g\in L^{\tilde{A}}(\Omega)$, where $A=\Phi_*\circ\Phi^{-1}$]{SA}. Our next result allows $\Omega$ to be a general domain and proves the compactness of $G_\Psi$ for a larger class of $g$. We require the following compatibility condition on Young functions. 
\begin{defn}\cite[Page 231]{Adams1975})
    Let $\Phi$ and $\Psi$ be two Young functions. We say {\it{$\Psi$ increases essentially more slowly than $\Phi$ near infinity $(\Psi\prec\prec \Phi)$}} if for all $k>0$ it holds that
\begin{equation*}
     \lim_{t\to \infty}\frac{\Psi(kt)}{\Phi(t)}=0.
\end{equation*}
 Clearly, $\Phi\prec\prec\Psi$ implies that $\Phi$ dominates $\Psi$ near infinity, but the converse does not hold.
\end{defn}
For our next theorem, we consider  a Banach function space $V$ satisfying the following assumptions:
 \begin{enumerate}
     \item[\textbf{\textit{A-1:}}] $\mathcal{C}_c(\Omega)\subset V$;
     \item[\textbf{\textit{A-2:}}] there exists a function $\delta: [0,\infty) \rightarrow [0,\infty)$ satisfying $\delta(t) \rightarrow 0$ as $t \rightarrow 0$ and the following inequality holds for any $g\in V:$
\begin{equation}\label{eqK}
\Psi^{-1}\left(\int_{\Omega}|g(x)|\,\Psi(|u(x)|) dx\right)\leq \delta\left(\|g\|_{V}\right)\Phi^{-1}\left(\int_{\Omega}\Phi(|\nabla u(x)|) dx\right ),\;\;\; \forall\, u\in \mathcal{C}^1_c(\Omega).
\end{equation}
 \end{enumerate}
\begin{thm}\label{solu} 
Let $\Omega$ be an open subset of $\mathbb{R}^N$, and $\Phi,\Psi$ be Young functions such that $\Phi,\tilde{\Phi},\Psi\in\Delta_2$. Further assume that  $\Phi$ satisfies \eqref{H1} when $|\Omega|=\infty.$ Let one of the following conditions hold:
\begin{enumerate}[(i)]
\item $\Psi\prec\prec \Phi_N$,
\item $\Phi$ satisfies \eqref{eqG1}.
\end{enumerate}
In addition, we assume that $V$ satisfies the above assumptions   $\textbf{A-1}$  and $\textbf{A-2}$. 
Then the map $G_{\Psi}$ is compact in $\mathcal{D}^{1,\Phi}_0(\Omega)$ for any $g\in \mathcal{F}_V(\Omega):=\overline{\mathcal{C}_c(\Omega)}^{V}$.
Moreover, if  $g\in \mathcal{F}_V(\Omega)$ is non-zero non-negative, then for each $r>0$, there exists an eigenvalue $\lambda=\tilde{\lambda}_1(r)>0$ of  \eqref{eqJ} such that the corresponding eigenfunction is non-negative.
\end{thm}
\begin{rem} We denote the function space for the weights considered in  Theorem \ref{thm:3}-Theorem \ref{theorem3} by $V.$
\begin{enumerate}[(i)]
     \item  
     Since $\Phi\prec\prec\Phi_N$ (see \cite[Page 133]{Rudd2005}), we can take $\Psi=\Phi$ in the above theorem.  
 Notice that,
 \begin{itemize}
     \item  Theorem \ref{thm:3}: $V=L^{\tilde{B_\Phi}}(\Omega)$ and $\delta(t)\asymp t,$
     \item   Theorem \ref{larentzthm}: $V=L^{\Phi,\infty}(\Omega)$ and $\delta(t)\asymp t,$ 
     \item  Theorem \ref{cor}: $V=X_\Phi(\Omega)$ and $\delta(t)\asymp t.$
 \end{itemize}
Thus for $V$ as considered above, Theorem \ref{solu} together with Theorem \ref{thm:3} or  Theorem \ref{larentzthm} or  Theorem \ref{cor} ensures that the map $G_\Phi$ is compact in  $\mathcal{D}^{1,\Phi}_0(\Omega)$ for any $g\in\mathcal{F}_V(\Omega)$. 
    \item 
In Theorem \ref{thm-bdd}, we have $V=L^1(\Omega)$ and $\delta(t)\asymp\max\{t^{1/p^-_\Phi},t^{1/p^+_\Phi}\}$. In Theorem \ref{theorem3}, we have $V=X_{\Phi,\Psi}(\Omega)$ and $\delta(t)\asymp\max\{t^{1/p^-_\Phi},t^{1/p^+_\Phi}\}.$
\item If $\mathcal{C}_c(\Omega)$ is dense in $V$, then $\mathcal{F}_V(\Omega)$ coincides with $V$.  For $V=L^{\tilde{B_\Phi}}(\Omega)$, we have $\mathcal{F}_V(\Omega)=L^{\tilde{B_\Phi}}(\Omega).$
Similarly, if $V=L^1(\Omega)$, then  $\mathcal{F}_V(\Omega)=L^1(\Omega)$.
\end{enumerate}

 \end{rem}

The rest of this article is organized in the following way: In section 2, we recall some properties of the Young function, symmetrization, and the notion of Banach function space and collect some known results. Section 3, Section 4, and Section 5 contain the proof of Theorem \ref{thm:3}-Theorem \ref{solu}. Section 6 includes some examples and concluding remarks. 
\section{preliminary}
We enlist some of the notations and conventions used in this article:
\begin{itemize}
    \item  $\mathcal{C}_c^1(\Omega)$ is the set of continuously differentiable functions with compact support.
    \item For $p\in (1,\infty)$, the conjugate of $p$ is denoted by $p^\prime$, i.e., $\frac{1}{p}+\frac{1}{p^\prime}=1.$
    \item  For any $f,g:[0,\infty)\rightarrow [0,\infty)$ we
denotes $f\asymp g$ if there exist constants $C_1,C_2>0$ such that $C_1f(t)\leq g(t)\leq C_2f(t)$ for all $t\in [0,\infty).$
\item $f^\prime$ represents the right derivatives of the function $f.$ 
\item For Young functions $\Phi$ and $\Psi$, their right derivatives are denoted by $\varphi$ and $\psi$, respectively.
\end{itemize}
 \subsection{Properties of Young function:}
In the following three propositions, we enlist some useful inequalities involving the Young functions.
\begin{prop}\cite{Krasn1961}
Let $\Phi$ be a Young function. Then the following inequalities hold for $t>0:$
 \begin{equation}\label{eqn-9}
     \Phi(t)\leq t\varphi(t)\leq \Phi(2t),
 \end{equation}
 \begin{equation}\label{eqn-10}
     \Phi\left(\frac{\tilde{\Phi}(t)}{t}\right)\leq \tilde{\Phi}(t),
 \end{equation}
 \begin{equation}\label{eqn-11}
    t\leq \Phi^{-1}(t)\tilde{\Phi}^{-1}(t)\leq 2t.
 \end{equation}
 In addition, if $\Phi\in \Delta_2,$ then 
 \begin{equation}\label{eqn-1}
     \Phi(t)\asymp t\varphi(t).
 \end{equation}
\end{prop}
\begin{prop}
Let $\Phi$ be a Young function, and $p^-_\Phi$ and $p^+_\Phi$ be as given in \eqref{*p-Phi*}.
\begin{enumerate}[(A)]
    \item If $\Phi,\,\tilde{\Phi}\in \Delta_2,$ then for $s,t>0$, the following inequalities hold for some $C\geq  1:$ 
\begin{align}
  \min\{s^{p^-_\Phi},s^{p^+_\Phi}\}\Phi(t)\leq \Phi(st)&\leq \max\{s^{p^-_\Phi},s^{p^+_\Phi}\}\Phi(t),\;\;\;\label{eqn-delta2-2}\\   \Phi^{-1}(st)&\leq \max\{s^{1/p^-_\Phi},s^{1/p^+_\Phi}\}\Phi^{-1}(t)\label{eqn----3}, 
  \\
    \tilde{\Phi}(st)&\leq \max\{s^{(p^-_\Phi)^\prime},s^{(p^+_\Phi)^\prime}\}\Phi(t),\;\;\;\label{eqn-delta2-2*}\\  \varphi(st)&\leq C \max\{s^{p^-_\Phi-1},s^{p^+_\Phi-1}\}\varphi(t).\label{eqn-delta2-2-2}
\end{align}   
 \item If $\Phi\in \Delta^\prime$, then for $s,t>0$, the following inequalities hold for some $C\geq 1:$
 \begin{align}
    \varphi(st)&\leq C\varphi(s)\varphi(t),\label{eqn-var}\\
     \Phi^{-1}(s)\Phi^{-1}(t)&\leq C\Phi^{-1}(st), \label{eqn-5}\\ \tilde{\Phi}^{-1}(st)&\leq  C \tilde{\Phi}^{-1}(s)\tilde{\Phi}^{-1}(t) \label{eqn-6}.
 \end{align}
  In addition, if $\tilde{\Phi}\in \Delta_2,$ then 
 \begin{equation}\label{eqn-7*}
     \tilde{\Phi}(s)\tilde{\Phi}(t)\leq C  \tilde{\Phi}(st).
 \end{equation}
\end{enumerate}
 \begin{proof} $(A):$  For proof of \eqref{eqn-delta2-2}, \eqref{eqn----3},  and \eqref{eqn-delta2-2*}, see \cite{Fukagai2006,Krasn1961}. From \eqref{eqn-1} and \eqref{eqn-delta2-2} we obtain \eqref{eqn-delta2-2-2}.

$(B):$ Inequality  \eqref{eqn-var} follows from \eqref{deld1} and \eqref{eqn-1}. For proof of \eqref{eqn-5}, see \cite[Page 6]{salort2021}.  Multiplying both sides of \eqref{eqn-5} by $\tilde{\Phi}^{-1}(s)\tilde{\Phi}^{-1}(t)\tilde{\Phi}^{-1}(st)$ and using \eqref{eqn-11} we obtain \eqref{eqn-6}. From \eqref{eqn-6} we get $\tilde{\Phi}^{-1}(\tilde{\Phi}(s)\tilde{\Phi}(t))\leq Cst$. Now, apply $\tilde{\Phi}$ on both sides and use \eqref{eqn-delta2-2}  to get \eqref{eqn-7*}.
\end{proof}
\end{prop}
\begin{prop}\label{lemcap}
Let $\Phi$ and $\Psi$ be Young functions such that  $\Phi,\tilde{\Psi}\in\Delta^\prime$ and $\Psi\in \Delta_2$. Then there exists  $C=C(\Phi,\Psi)\geq 1$ such that the following inequality holds for any $s,t\geq 0:$ $$\Psi\circ\Phi^{-1}(s) \Psi\circ\Phi^{-1}(t) \leq C\Psi\circ\Phi^{-1}(st).$$ 
\begin{proof}
 Since $\tilde{\Psi}\in \Delta^\prime$ and  $\Psi\in \Delta_2$, by \eqref{eqn-7*} there exists $C\geq 1$ such that  
\begin{equation*}\label{eqqd1}
    \tilde{\tilde{\Psi}}\circ\Phi^{-1}(s)\,  \tilde{\tilde{\Psi}}\circ\Phi^{-1}(t) \leq C \tilde{\tilde{\Psi}}\left(\Phi^{-1}(s)\Phi^{-1}(t)\right),
\end{equation*}
for all $s,t\geq 0$.
Moreover, by \eqref{eqn-5} there exists $C_1\geq 1$ such that  $\Phi^{-1}(s)\Phi^{-1}(t)\leq C_1\Phi^{-1}(st).$ Consequently,  using $ \tilde{\tilde{\Psi}}=\Psi$ and \eqref{eqn-delta2-2} we obtain
\begin{align*}
       \Psi\circ\Phi^{-1}(s) \Psi\circ\Phi^{-1}(t) \leq C\Psi\left(\Phi^{-1}(s)\Phi^{-1}(t)\right)\leq C\Psi\left(C_1\Phi^{-1}(st)\right)\leq CC_1^{p^+_\Psi}\Psi\circ\Phi^{-1}(st).
\end{align*}
\end{proof}
\end{prop}
\begin{exmp}\label{exam} The following Young functions satisfy the $\Delta^\prime$-condition and $\tilde{\Phi}\in \Delta_2$:
\begin{enumerate}[(i)]
    \item  $\Phi(t)=t^p,\;\;\;t\geq 0,\;p>1$;
    \item $\Phi(t)=t^p+t^q,\;\;\;t\geq 0,\; p,q>1$;
    \item $\Phi(t)=\max\{t^p,t^q\},\;\;\;t\geq 0,\;p,q>1$;
     \item $\Phi(t)=t^p\log (e+t),\;\;\; t\geq 0,\;p>1$;
    \item $\Phi(t)=t^p(1+|\log t|),\;\;\; t\geq 0,\;p>1$.
\end{enumerate}
\end{exmp}

\subsection{Symmetrization}
Let $\Omega\subset \mathbb{R}^N$ be an open set and $\mathcal{M}(\Omega)$ be the set of all extended real-valued Lebesgue measurable functions that are finite a.e. in $\Omega$. For $f\in \mathcal{M}(\Omega)$, we define the following notions:
\begin{itemize}
\item \textbf{One-dimensional decreasing rearrangement $f^*$:} For $t>0$,  $f^*$ is defined as $$f^*(t)=\inf \left\{s>0:|\{x\in \Omega:|f(x)|>s\}|<t\right\},$$ where $|E|$ denotes the Lebesgue measure of a set $E\subset\mathbb{R}^N$.
\item \textbf{Maximal function $f^{**}$:}  The \textit{maximal function $f^{**}$ of $f^*$} is defined as
\begin{equation}\label{maximal}
  f^{**}(t)=\frac{1}{t}\int_0^tf^*(\tau) d\tau,\;\;\; t>0.  
  \end{equation}
\end{itemize}
Next, we state two important inequalities related to symmetrization. 
\begin{prop}\label{prop1}
Let $\Phi$ be any Young function. Then the following inequalities hold.
\begin{enumerate}
    \item \textbf{Hardy-Littlewood inequality \cite{Edmunds-2004}:}  Let $u$ and $v$ be two measurable functions. Then $$\int_{\Omega}|u(x)|\,\Phi(|v(x)|) dx\leq \int_0^{|\Omega|}u^*(t)\Phi(v^*(t)) dt.$$
    \item  \textbf{P\'olya-Szeg\"o inequality \cite{Brothers1988}:}  If $u\in \mathcal{C}_c^1(\Omega)$, then 
        $$\int_0^{|\Omega|}\Phi\left(N\omega_N^{\frac{1}{N}}r^{1-\frac{1}{N}}\left(-\frac{du^*}{dr}\right)\right)  dr\leq \int_{\Omega}\Phi(|\nabla u(x)|)dx,$$
        where  $\omega_N$ is the measure of the unit ball in $\mathbb{R}^N$.
\end{enumerate}
\end{prop}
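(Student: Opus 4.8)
Both parts of Proposition \ref{prop1} are classical; the plan is to derive each from its well-known prototype, and I outline the arguments below.

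For the Hardy--Littlewood inequality, the plan is to reduce the weighted statement to the unweighted rearrangement inequality $\int_\Omega f\,h\le\int_0^{|\Omega|}f^{*}h^{*}$, valid for nonnegative measurable $f,h$. Since $\Phi$ is continuous and increasing, $|\{x\in\Omega:\Phi(|v(x)|)>s\}|=|\{x\in\Omega:|v(x)|>\Phi^{-1}(s)\}|$ for every $s>0$, so $\Phi\circ|v|$ and $|v|$ have the same distribution up to the change of level $s\mapsto\Phi^{-1}(s)$; hence $(\Phi\circ|v|)^{*}=\Phi\circ v^{*}$ on $(0,|\Omega|)$. Applying the prototype with $f=u$ (nonnegative, since $u$ plays the role of a weight here) and $h=\Phi\circ|v|$ then yields the assertion. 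The prototype itself follows from the layer-cake identity $f=\int_0^\infty\mathbf{1}_{\{f>\lambda\}}\,d\lambda$, the analogous one for $h$, and Fubini's theorem:
$$\int_\Omega f\,h=\int_0^\infty\!\!\int_0^\infty\bigl|\{f>\lambda\}\cap\{h>\tau\}\bigr|\,d\lambda\,d\tau\le\int_0^\infty\!\!\int_0^\infty\min\bigl\{\,|\{f>\lambda\}|,\,|\{h>\tau\}|\,\bigr\}\,d\lambda\,d\tau,$$
and repeating the same computation for $f^{*},h^{*}$ on $(0,|\Omega|)$ gives equality there, because the super-level sets $\{f^{*}>\lambda\}=(0,|\{f>\lambda\}|)$ and $\{h^{*}>\tau\}=(0,|\{h>\tau\}|)$ are nested intervals whose intersection has measure $\min\{|\{f>\lambda\}|,|\{h>\tau\}|\}$.

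For the P\'olya--Szeg\"o inequality the plan is to combine the coarea formula with the isoperimetric inequality. Write $\mu(t)=|\{|u|>t\}|$ for $u\in\mathcal{C}^1_c(\Omega)$. For a.e. $t>0$ the coarea formula gives $-\mu'(t)=\int_{\{|u|=t\}}|\nabla u|^{-1}\,d\mathcal{H}^{N-1}$ and $-\frac{d}{dt}\int_{\{|u|>t\}}\Phi(|\nabla u|)\,dx=\int_{\{|u|=t\}}\Phi(|\nabla u|)\,|\nabla u|^{-1}\,d\mathcal{H}^{N-1}$. Applying Jensen's inequality for the convex function $\Phi$ against the probability measure $(-\mu'(t))^{-1}|\nabla u|^{-1}\,d\mathcal{H}^{N-1}$ on the level set $\{|u|=t\}$ yields
$$\Phi\!\left(\frac{\mathcal{H}^{N-1}(\{|u|=t\})}{-\mu'(t)}\right)\bigl(-\mu'(t)\bigr)\le-\frac{d}{dt}\int_{\{|u|>t\}}\Phi(|\nabla u|)\,dx.$$
By the isoperimetric inequality $\mathcal{H}^{N-1}(\{|u|=t\})\ge N\omega_N^{1/N}\mu(t)^{1-1/N}$ and the monotonicity of $\Phi$, the numerator on the left may be replaced by $N\omega_N^{1/N}\mu(t)^{1-1/N}$; then integrating over $t\in(0,\infty)$ and changing variables via $r=\mu(t)$ (so that $t=u^{*}(r)$ and $-\mu'(t)=(-du^{*}/dr)^{-1}$) turns the left-hand side into $\int_0^{\mu(0)}\Phi\bigl(N\omega_N^{1/N}r^{1-1/N}(-du^{*}/dr)\bigr)\,dr$, which equals the integral over all of $(0,|\Omega|)$ since $u^{*}$ is constant (hence $-du^{*}/dr=0$) on $(\mu(0),|\Omega|)$ and $\Phi(0)=0$. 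This is precisely the claimed inequality.

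I expect the only genuinely delicate points to lie in the P\'olya--Szeg\"o part: the rigorous use of the coarea formula and of differentiation under the integral sign, the treatment of those levels $t$ at which $\{|u|=t\}$ meets the critical set $\{\nabla u=0\}$ in positive $\mathcal{H}^{N-1}$-measure, and the absolute continuity of $\mu$ and of $u^{*}$ required for the change of variables. All of this is standard --- Sard-type arguments together with fine properties of Sobolev/Lipschitz functions --- and is carried out in detail in the works cited in the statement; since Proposition \ref{prop1} enters the remainder of the paper only as a ready-made tool, one may equally well simply invoke those references.
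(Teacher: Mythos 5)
The paper gives no proof of Proposition \ref{prop1}: both inequalities are quoted as classical facts from the cited references, which is exactly what your closing sentence suggests doing, and your sketch reproduces the standard arguments behind those references (layer-cake representation plus $(\Phi\circ|v|)^{*}=\Phi\circ v^{*}$ for Hardy--Littlewood; coarea formula, isoperimetric inequality and Jensen for P\'olya--Szeg\"o), so the proposal is correct in substance and in approach. The only caveats are minor and you flag them yourself: for signed $u$ one just uses $u\le|u|$ and $u^{*}=|u|^{*}$; and since $u$ is only $\mathcal{C}^1$, Sard's theorem is not available, so the critical levels are handled, as in the cited works, via the coarea identity $\int_{\{\nabla u=0\}}|\nabla u|\,dx=0$ (giving $\mathcal{H}^{N-1}(\{|u|=t\}\cap\{\nabla u=0\})=0$ for a.e.\ $t$) together with the monotonicity of $s\mapsto s\,\Phi(A/s)$, which follows from convexity and $\Phi(0)=0$ and covers the case where $-\mu'(t)$ strictly exceeds $\int_{\{|u|=t\}}|\nabla u|^{-1}\,d\mathcal{H}^{N-1}$.
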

\subsection{Banach function space:} 
Recall that $\mathcal{M}(\Omega)$ is the set of all extended real-valued Lebesgue measurable functions that are finite a.e. in $\Omega$. Let $\mathcal{M}^+(\Omega)$ be the set of all non-negative functions in $\mathcal{M}(\Omega)$.
\begin{defn}\label{BFS-defn}
A {\it{Banach function norm}} is a map $\rho:\mathcal{M}^+(\Omega)\rightarrow[0,\infty]$, such that  for $f,\,g,\,f_n\,(n\in \mathbb{N})$ in $\mathcal{M}^+(\Omega)$, $\lambda\geq 0$, and  for Lebesgue measurable subsets $E$ of $\Omega$, the following are true:
\begin{enumerate}[(a)]
   \item $\rho(f)=0\iff f=0$ a.e.,\;\;\; $\rho(\lambda f)=\lambda \rho(f)$,\;\;\;$\rho(f+g)\leq \rho(f)+\rho(g);$ 
   \item if $ g\leq f$ a.e., then $\rho(g)\leq \rho(f);$
    \item if $ f_n \uparrow f$  a.e., then $\rho(f_n)\uparrow \rho(f);$   
  \item\label{p4} if $|E|< \infty$, then $\rho(\chi_E)<\infty;$
   \item\label{p5} if $|E|<\infty$, then there exists $C=C(|E|)\in (0,\infty)$ such that $\int_Ef dx\leq C\rho(f)$.
\end{enumerate}
 For a Banach function norm  $\rho$, the collection $R_\rho(\Omega)=\left\{ f\in \mathcal{M}(\Omega): \rho(|f|)<\infty\right\}$ is called a {\it{Banach function space}} with respect to norm $\|f\|_{R_\rho(\Omega)}:=\rho(|f|).$
 Indeed, $\left(R_\rho(\Omega),\|\cdot\|_{R_\rho(\Omega)}\right)$ is a Banach space.
 
 A Banach function norm $\rho$ is said to be  {\it{rearrangement-invariant}} if $\rho(f)=\rho(g)$ whenever $f,g\in\mathcal{M}^+(\Omega)$ are equimeasurable, i.e. $|\{x\in\Omega :|f(x)|>\lambda\}|=|\{x\in\Omega :|g(x)|>\lambda\}|$ for every $\lambda\geq 0$. The corresponding Banach function space is said to be a {\it{rearrangement-invariant Banach function space}}. For further readings on Banach function spaces, we refer to \cite{Edmunds-2004}.
 \end{defn}
 Next, we provide an explicit construction of certain Banach function spaces that appear in this article.

\begin{prop}\label{B-F-S}
Let $\eta$ be a positive function on $(0,|\Omega|)$ such that 
\begin{enumerate}[(i)]
    \item  $\eta$ is bounded on $(0,r]$ for all $r\in (0,|\Omega|),$
    \item $\frac{\eta(r)}{r}$ is decreasing on $(0,|\Omega|)$.
\end{enumerate}
For $f\in \mathcal{M}^+(\Omega)$, define  $$\rho_\eta(f)=\sup_{0<r<|\Omega|}\{f^{**}(r)\eta(r)\}.$$
Then $\rho_\eta$ is a Banach function norm, and the space $M_\eta(\Omega)=\{f\in \mathcal{M}(\Omega):\rho_\eta(|f|)<\infty\}$ is a rearrangement-invariant Banach function space.
\end{prop}
\begin{proof}
 The conditions $\mathrm{(a)}$, $\mathrm{(b)}$, and $\mathrm{(c)}$ of Definition \ref{BFS-defn} follow from the elementary properties of $f^{**}$ (see \cite[Proposition 3.2.15, Theorem 3.2.16]{Edmunds-2004}). To verify $\mathrm{(d)}$ of Definition \ref{BFS-defn}, let $E\subset\Omega$ with measure  $|E|=r<\infty$. Then $\chi_E^*=\chi_{[0,r)}$ and so
  $$ \rho_\eta(\chi_E)=\sup_{0<s<|\Omega|}\{\chi_{E}^{**}(s)\eta(s)\}=\sup_{0<s<|\Omega|}\left\{\min\left(1,\frac{r}{s}\right)\eta(s)\right\}$$ $$=\max\left\{\sup_{0<s<r}\eta(s),\;\;\; r\cdot\sup_{r\leq s<|\Omega|}\frac{\eta(s)}{s}\right\}=\sup_{0<s\le r}\eta(s), $$ where the last equality is obtained from the assumption, $\eta(s)/s$ is decreasing on $(0,|\Omega|)$. Now $(d)$ follows from the assumption $(i).$
 Finally, for $(e)$ we consider $f\in M_\eta(\Omega)$ and a Lebesgue measurable subset $E$  of $\Omega$ with $|E|=r\in (0,\infty)$. By the Hardy-Littlewood principle, we have
    $$  \left|\int_Ef(x)dx\right|\leq \int_0^{r}f^*(s)ds=rf^{**}(r)\leq\frac{r}{\eta(r)}\cdot\sup_{0<s<|\Omega|}\{f^{**}(s)\eta(s)\}=C\rho_\eta(f),
$$
where  $C=\frac{r}{\eta(r)}<\infty$. This proves $\mathrm{(e)}$ of Definition \ref{BFS-defn}. Since $\rho_\eta$ is defined in terms of $f^{**}$, we conclude that $M_\eta(\Omega)$ is a rearrangement-invariant Banach function space. This completes the proof of the proposition.
\end{proof}

\begin{rem}\label{BF-S}
\begin{enumerate}[(i)]
    \item 

Let $\Phi$ be a Young function such that $p^+_\Phi<N$. For $r>0$, consider $\eta(r)=1/{\Phi\left(r^{-\frac{1}{N}}\right)}$. It is easy to check that $\eta$ is positive on $ (0,|\Omega|)$ and bounded on $(0,r]$ for all $r\in (0,|\Omega|)$. Observe that 
 $$\left(\frac {\eta(r)}{r}\right)^\prime=\frac{1}{N}\left\{r^{-\frac{1}{N}}\varphi\left(r^{-\frac{1}{N}}\right)-N\Phi\left(r^{-\frac{1}{N}}\right)\right\}\left(r\Phi\left(r^{-\frac{1}{N}}\right)\right)^{-2},\;\;\;\,r\in (0,|\Omega|).$$ Moreover, the definition of $p_\Phi^+$ (see \eqref{*p-Phi*}) gives  $r^{-\frac{1}{N}}\varphi\left(r^{-\frac{1}{N}}\right)\leq p^+_\Phi\Phi\left(r^{-\frac{1}{N}}\right)<N\Phi\left(r^{-\frac{1}{N}}\right)$. Consequently  $\left(\eta(r)/r\right)^\prime<0$, and hence $\eta(r)/r$ is a decreasing function. Now, by the above proposition, $L^{\Phi,\infty}(\Omega)$ (see \eqref{Phi-infty}) is a rearrangement-invariant Banach function space.
 
\item For a Young function $\Phi$ satisfying \eqref{8} and $Q_\Phi$ as given in \eqref{eqA*}, consider $\eta(r)=r\varphi\left( \int_r^{|\Omega|}\frac{1}{Q_\Phi(s)}ds\right)$.  Clearly, $\eta(r)> 0$ for all $r\in(0,|\Omega|)$ and $\eta(r)/r$ decreases on $(0,|\Omega|)$. Since $\Phi$ satisfies \eqref{8}, $\eta$ is bounded on $(0,r]$ for all $r\in (0,|\Omega|)$. Thus, it follows from Proposition \ref{B-F-S} that $X_\Phi(\Omega)$ (see \eqref{Phi-Phi}) is rearrangement-invariant Banach function space.

 \item Let $\Phi$ and $\Psi$ be Young functions satisfying \eqref{7} condition. Consider the function $\eta(r)=r\Psi\left(\|\zeta\|_{L^{\tilde{\Phi}}((r,|\Omega|))}\right)$, where $\zeta(s)=s^{\frac{1}{N}-1}$. Notice that,  $\eta$ is positive on $(0,|\Omega|)$ and $\eta(r)/r$ decreases on $(0,|\Omega|).$ 
Using \eqref{7} we can see that $\eta$ is bounded on $(0,r]$ for all $r\in (0,|\Omega|)$.  Therefore, by Proposition \ref{B-F-S}, $X_{\Phi,\Psi}(\Omega)$ (see \eqref{Phi-Psi}) is a rearrangement-invariant Banach function space.
\end{enumerate}
\end{rem}

\subsection{Other function spaces:} Here, we briefly discuss some function spaces that are needed for the development of this article.
\begin{enumerate}[(A)]
    \item \textbf{ Weighted Orlicz spaces:}\label{orlicz-sob}
Given a Young function $\Phi$ satisfying the $\Delta_2$-condition, an open set $\Omega\subset\mathbb{R}^N$, and $g\in \mathcal{M}^+(\Omega),$ we define
$$ L^{\Phi,g}(\Omega)=\left\{u:\Omega\rightarrow \mathbb{R}\; \text{measurable}\; :\int_{\Omega}\Phi\left( \frac{|u(x)|}{\lambda}\right)g(x) dx<\infty\; \text{for some}\; \lambda>0\right\}.$$
 The space $L^{\Phi,g}(\Omega)$  is a Banach space with respect to the following Luxemburg norm:
\begin{equation}\label{defn-Luxemberg}
   \|u\|_{L^{\Phi,g}(\Omega)}=\inf\left\{ \lambda>0:\int_{\Omega}\Phi\left( \frac{|u(x)|}{\lambda}\right)g(x) dx\leq 1\right\}.
\end{equation}
If $g\equiv 1$, $L^{\Phi,g}(\Omega)$ coincides with the usual Orlicz space, it is denoted by $L^\Phi(\Omega)$. In particular, for $\Phi(t) = t^p$ with $p \in (1, \infty)$, $L^\Phi(\Omega) = L^p(\Omega)$  and $\|u\|_{L^{\Phi}(\Omega)}=\|u\|_{L^{p}(\Omega)}$.
\item \textbf{Orlicz-Sobolev spaces:} The Orlicz-Sobolev space is defined by
$$ W^{1,\Phi}(\Omega)=\left\{u\in L^\Phi(\Omega) :\;\;|\nabla u|\in L^\Phi(\Omega) \right\},$$
where $\nabla u$ is considered in the distributional sense. The space $ W^{1,\Phi}(\Omega)$ is a reflexive Banach space with the norm $\|u\|_{L^{1,\Phi}(\Omega)}:=\|u\|_{L^\Phi(\Omega)}+\|\nabla u\|_{L^\Phi(\Omega)}$  when $\Phi,\tilde{\Phi}\in \Delta_2$.
\end{enumerate}
For further readings on Orlicz and Orlicz-Sobolev spaces, we refer to \cite{Adams1975,Lars2011,Krasn1961}. 

In the following proposition, we list some properties of the Luxemburg norm.
 \begin{prop}\cite[Lemma 2.1.14,
Lemma 2.6.5]{Lars2011}\label{prop2-pell}
 Let $\Omega$ be an open subset of $\mathbb{R}^N$,  $\Phi$ be a Young function, and $g\in \mathcal{M}^+(\Omega)$.  If $u\in L^{\Phi,g}(\Omega)$ and $ v\in L^{\tilde{\Phi},g}(\Omega)$, then the following hold: 
 \begin{enumerate}[(i)]
     \item  H\"older's inequality: $\displaystyle\int_{\Omega}|u(x)v(x)|g(x) dx\leq 2\|u\|_{L^{\Phi,g}(\Omega)}\|v\|_{L^{\tilde{\Phi},g}(\Omega)}.$
     \item $  \|u\|_{L^{\Phi,g}(\Omega)}\leq 1+\displaystyle\int_{\Omega}\Phi(|u(x)|)g(x)dx.$
     \item $ \displaystyle\int_{\Omega}\Phi({\beta|u(x)|})g(x) dx=1, $ where $\beta^{-1}=\|u\|_{L^{\Phi,g}(\Omega)}.$
  \end{enumerate}
  \end{prop}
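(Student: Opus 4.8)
These are the standard structural properties of the Luxemburg norm, here over the measure space $(\Omega,\mu)$ with $d\mu=g\,dx$, and I would prove them from scratch using only Young's inequality, the convexity of $\Phi$, and the standing hypothesis $\Phi\in\Delta_2$. Throughout, write $\rho(\lambda)=\int_\Omega\Phi(|u|/\lambda)\,g\,dx$ and $\sigma(\lambda)=\int_\Omega\tilde{\Phi}(|v|/\lambda)\,g\,dx$ for $\lambda>0$, so that $\|u\|_{L^{\Phi,g}(\Omega)}=\inf\{\lambda>0:\rho(\lambda)\le1\}$ and similarly for $v$. The first step is the elementary observation, used in (i) and refined in (iii), that $\rho$ is non-increasing and that the admissible set $\{\lambda:\rho(\lambda)\le1\}$ is a nonempty half-line; applying the monotone convergence theorem along a sequence $\lambda_n\downarrow\|u\|_{L^{\Phi,g}(\Omega)}$ then gives $\rho(\|u\|_{L^{\Phi,g}(\Omega)})\le1$ whenever $u\ne0$ (and likewise $\sigma(\|v\|_{L^{\tilde{\Phi},g}(\Omega)})\le1$). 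Note this half of (iii) uses no $\Delta_2$.

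For (i): the cases $u=0$ or $v=0$ a.e.\ are trivial, so set $a=\|u\|_{L^{\Phi,g}(\Omega)}>0$ and $b=\|v\|_{L^{\tilde{\Phi},g}(\Omega)}>0$. Young's inequality for complementary functions, $st\le\Phi(s)+\tilde{\Phi}(t)$, applied pointwise with $s=|u(x)|/a$, $t=|v(x)|/b$ and integrated against $g\,dx$, gives $\frac1{ab}\int_\Omega|uv|\,g\,dx\le\rho(a)+\sigma(b)\le 2$ by the preceding paragraph, which is the claim. For (ii): let $M=\int_\Omega\Phi(|u|)\,g\,dx$. If $M\le1$ then $\lambda=1$ is admissible and $\|u\|_{L^{\Phi,g}(\Omega)}\le1\le M+1$; if $M>1$, convexity of $\Phi$ with $\Phi(0)=0$ gives $\Phi(t/M)\le M^{-1}\Phi(t)$ for all $t\ge0$, hence $\rho(M)\le1$, so $\lambda=M$ is admissible and $\|u\|_{L^{\Phi,g}(\Omega)}\le M\le M+1$.

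For (iii), which is where the real work lies: assume $u\ne0$, so $\mu(\{u\ne0\})>0$. Here I would use $\Phi\in\Delta_2$ crucially: iterating $\Phi(2t)\le C\Phi(t)$ shows that finiteness of $\rho$ at one point forces $\rho(\lambda)<\infty$ for every $\lambda>0$, and then dominated convergence makes $\rho$ continuous on $(0,\infty)$; it is strictly decreasing (since $\Phi$ is strictly increasing and $\mu(\{u\ne0\})>0$), with $\rho(\lambda)\to0$ as $\lambda\to\infty$ and $\rho(\lambda)\to\infty$ as $\lambda\to0^+$. By the intermediate value theorem there is a unique $\lambda^\ast$ with $\rho(\lambda^\ast)=1$, and monotonicity gives $\{\lambda:\rho(\lambda)\le1\}=[\lambda^\ast,\infty)$, so $\|u\|_{L^{\Phi,g}(\Omega)}=\lambda^\ast$ and the integral in (iii) equals $\rho(\lambda^\ast)=1$. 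I expect the main obstacle to be precisely this finiteness-and-continuity of the modular: without $\Delta_2$ one obtains only ``$\le1$'' in (iii), and equality can genuinely fail, so the delicate point is invoking the global $\Delta_2$ hypothesis to exclude a jump of $\rho$ across the level $1$. (Alternatively, all three items follow at once from the general theory of Orlicz spaces over the abstract measure $\mu=g\,dx$, as developed in \cite{Lars2011}.)
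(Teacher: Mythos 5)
Your proof is correct, and it is actually more than the paper offers: the paper does not prove Proposition \ref{prop2-pell} at all, it simply lists these Luxemburg-norm properties and refers to \cite{Lars2011}, so your self-contained argument is a genuinely different (more elementary and explicit) route. Your treatment of (i) and (ii) is the standard one — Young's inequality plus the fact that the modular at the norm is $\leq 1$ (via monotone convergence along $\lambda_n\downarrow\|u\|_{L^{\Phi,g}(\Omega)}$), and the convexity estimate $\Phi(t/M)\leq M^{-1}\Phi(t)$ for $M>1$ (with $M=\infty$ trivial) — and you correctly isolate the only delicate point, namely that the \emph{equality} in (iii) genuinely needs $\Delta_2$ to force the weighted modular $\rho(\lambda)=\int_\Omega\Phi(|u|/\lambda)g\,dx$ to be finite for every $\lambda>0$, continuous, strictly decreasing on $\{\lambda:\rho(\lambda)<\infty\}$, and to have the limits $\infty$ at $0^+$ and $0$ at $\infty$, so that the intermediate value theorem pins down $\rho(\|u\|_{L^{\Phi,g}(\Omega)})=1$; this is consistent with the paper's standing assumption that $\Phi$ (and $\Psi$) satisfy the global $\Delta_2$ condition \eqref{DELTA}. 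Two small points you handle implicitly and could state once: (iii) presupposes $\|u\|_{L^{\Phi,g}(\Omega)}>0$, i.e.\ $u\neq 0$ on a set of positive measure with respect to $g\,dx$ (otherwise the quotient is undefined, and in (i) the case $\|u\|_{L^{\Phi,g}(\Omega)}=0$ forces $uv g=0$ a.e., so the inequality is trivial as you say); and the nonemptiness of $\{\lambda:\rho(\lambda)\leq1\}$ already uses the dominated-convergence argument you spell out later, so you may as well invoke it there. What the paper's citation buys is brevity and access to the general theory over an abstract measure $d\mu=g\,dx$; what your argument buys is transparency about exactly where $\Delta_2$ enters, which matters since without it only ``$\leq 1$'' survives in (iii).
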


\subsection{Some embedding results}
First, we state a necessary and sufficient condition for the continuous embedding between the weighted Orlicz spaces. The proof follows similarly as the proof of \cite[Theorem 8.12]{Adams1975}.
\begin{thm}\label{emb-ele}
 Let $\Omega$ be an open subset of $\mathbb{R}^N$ and $g\in L^1_{\mathrm{loc}}(\Omega)$ with $g\geq 0$. Let $\Phi$ and $\Psi$ be Young functions satisfying the $\Delta_2$-condition. Then  $L^{\Phi,g}(\Omega)\hookrightarrow L^{\Psi,g}(\Omega)$  if and only if one of the following conditions holds:
    \begin{enumerate}[(i)]
        \item $\Phi$ dominates $\Psi$ globally,
        \item $\Phi$ dominates $\Psi$ near infinity and $\Omega$ is bounded.
    \end{enumerate}
\end{thm}
Next, we state an embedding theorem due to  Cianchi; see \cite[Theorem 1, Theorem 3]{cianchi1996}, \cite[Theorem 1]{Cianchi2000}, and   \cite[Theorem 1a, Corollary 1]{Cianchi2-1996}.
\begin{thm}\label{thm-cianchi}
 Let $\Omega$ be an open subset of $\mathbb{R}^N$, and  $\Phi,$ $\Psi$ be  Young functions. Assume that $\Phi$ satisfies \eqref{H1} when $|\Omega|=\infty.$ We have
 \begin{enumerate}[(i)]
     \item  $\mathcal{D}_0^{1,\Phi}(\Omega)\hookrightarrow L^{\Phi_N}(\Omega),$ where $\Phi_N$ is given in \eqref{Phi--N}. 

     \item if $\Phi$ satisfies \eqref{eqG1}, then $\mathcal{D}_0^{1,\Phi}(\Omega)\hookrightarrow L^\infty(\Omega)$.

     \item for a bounded Lipschitz domain $\Omega,$ the following embeddings are compact:
  \[W^{1,\Phi}(\Omega)\hookrightarrow
     \begin{cases} 
       \text{$L^{\Psi}(\Omega)$} &\quad \text{if  $\Psi\prec\prec \Phi_N$,}\\
       \text{$L^\infty(\Omega)$} & \quad\text{if  $\Phi$ satisfies \eqref{eqG1}.}
     \end{cases}
\]
 \end{enumerate}
 
\end{thm}
\begin{prop}\label{prop:comp}
Let $\Omega$ be an open subset of $\mathbb{R}^N$, and $\Phi$ and $\Psi$ be Young functions such that $\Phi,\tilde{\Phi},\Psi\in\Delta_2.$  Further assume that $\Phi$ satisfies \eqref{H1} when $|\Omega|=\infty.$ Let  one of the following conditions hold:
\begin{enumerate}[(i)]
\item  $\Psi\prec\prec\Phi_N$,
\item  $\Phi$ satisfies \eqref{eqG1}.
\end{enumerate}
 Then the embedding $\mathcal{D}_0^{1,\Phi}(\Omega)\hookrightarrow L_\mathrm{loc}^{\Psi}(\Omega)$ is compact.
\begin{proof}  
Since $\mathcal{D}_0^{1,\Phi}(\Omega)$ a is reflexive space (see \cite[Proposition 3.1]{Cianchi2017}), it is equivalent to show that, for a bounded Lipschitz domain $\Omega_1\subset \Omega$  and a bounded sequence $(u_n)$ in $\mathcal{D}_0^{1,\Phi}(\Omega)$,  there exists a subsequence of $(u_n|_{\Omega_1})$ that converges in $L^{\Psi}(\Omega_1)$.
\begin{enumerate}[$(i)$]
     \item In this case, by  Theorem \ref{thm-cianchi} we have $\mathcal{D}_0^{1,\Phi}(\Omega)\hookrightarrow L^{\Phi_N}(\Omega)$. Since  $\Phi\prec\prec\Phi_N$ (see \cite[Page 133]{Rudd2005}) by Theorem \ref{emb-ele}, we also have $L^{\Phi_N}(\Omega_1)\hookrightarrow L^{\Phi}(\Omega_1)$. Therefore, $(u_n|_{\Omega_1})$ is a bounded sequence  in $W^{1,\Phi}(\Omega_1).$ Now  Theorem \ref{thm-cianchi} assures that $(u_n|_{\Omega_1})$ has a convergent sub-sequence in $L^{\Psi}(\Omega_1)$. 

     \item In this case, we have   $\mathcal{D}_0^{1,\Phi}(\Omega)\hookrightarrow L^{\infty}(\Omega)$ (by Theorem \ref{thm-cianchi}), and  $L^\infty(\Omega_1)\hookrightarrow L^{\Phi}(\Omega_1)$. Thus, 
     $(u_n|_{\Omega_1})$ is a bounded sequence  in $W^{1,\Phi}(\Omega_1).$ Since $W^{1,\Phi}(\Omega_1)\hookrightarrow L^\infty(\Omega_1)$ is compact (Theorem \ref{thm-cianchi}) and $L^\infty(\Omega_1)\hookrightarrow L^{\Psi}(\Omega_1)$, we conclude that $(u_n|_{\Omega_1})$ has a convergent sub-sequence in $L^{\Psi}(\Omega_1)$. 

\end{enumerate}
\end{proof}
\end{prop}
If $\Psi\in \Delta_2,$ from  
\eqref{modular}, it is easy to deduce the following
 {\it{weighted norm inequality}:}
 \begin{equation}\label{norm-ineq}
 \|u\|_{L^{\Psi,|g|}(\Omega)}\leq C_1\|\nabla u\|_{L^\Phi(\Omega)},\;\;\;\forall\,u\in\mathcal{C}_c^1(\Omega), \end{equation}
  for some $C_1>0$. However, the converse may not be true for a general $\Phi.$ The following lemma ensures that under some assumptions on $\Phi$ and $\Psi$, one can obtain \eqref{modular} from \eqref{norm-ineq}.
 \begin{lem}\label{rem8*}
 Let $\Phi$ and $\Psi$ be Young functions such that $\Phi\in\Delta_2$, $\,\tilde{\Phi},\,\Psi\in\Delta^\prime$, and $g\in L^1_\mathrm{loc}(\Omega)$. If \eqref{norm-ineq} holds, then there exists $C_2=C_2(\Phi,\Psi)>0$  such that \eqref{modular} holds with $C=C_2C_1$.
 \begin{proof}
Let $\Phi,\Psi$, and $g$ be as given above. Since $\Psi\in \Delta^\prime$, by \eqref{deld1} there exists $C_3\geq 1$ such that
\begin{equation}\label{eqn1-lema2.13}
    \Psi\left(| u(x)|\right)\leq C_3\Psi\left(C_1\|\nabla u\|_{L^\Phi(\Omega)}\right)\Psi\left(\frac{| u(x)|}{C_1\|\nabla u\|_{L^ \Phi(\Omega)}}\right),\;\;\;\forall\,u\in \mathcal{C}_c^1(\Omega),\, x\in \Omega.
\end{equation}
Now use \eqref{eqn1-lema2.13},  \eqref{norm-ineq}, and $(iii)$ of Proposition \ref{prop2-pell} to get
\begin{align*}
    \int_\Omega \Psi(|u(x)|) |g(x)|dx&\leq  C_3\Psi\left(C_1\|\nabla u\|_{L^\Phi(\Omega)}\right) \int_{\Omega}\Psi\left(\frac{| u(x)|}{C_1\|\nabla u\|_{L^\Phi(\Omega)}}\right)|g(x)|dx\\&\leq  C_3\Psi\left(C_1\|\nabla u\|_{L^\Phi(\Omega)}\right)\int_{\Omega}\Psi \left(\frac{| u(x)|}{\| u\|_{L^{ \Psi,|g|}(\Omega)}} \right) |g(x)|dx\\&=C_3\Psi\left(C_1\|\nabla u\|_{L^\Phi(\Omega)}\right).
    \end{align*}  Applying $\Psi^{-1}$ on both sides of the above inequality and using \eqref{eqn----3}, we get 
\begin{equation*}
    \Psi^{-1}\left( \int_\Omega \Psi(|u(x)|) |g(x)|dx\right)\leq C_3^{1/p^-_\Psi}C_1\|\nabla u\|_{L^\Phi(\Omega)}.
\end{equation*}
Thus, the proof is complete if we show that \begin{equation}\label{eqn2-lem3.1}
   \|\nabla u\|_{L^\Phi(\Omega)}\leq C_4 \Phi^{-1}\left( \int_\Omega \Phi(|\nabla u(x)|) dx\right) ,\;\;\;\forall\,u\in \mathcal{C}_c^1(\Omega),
\end{equation}
for some $C_4>0$. Since $\Phi\in \Delta_2$ and $\tilde{\Phi}\in \Delta^\prime$, by \eqref{eqn-7*} there exists $C_5\geq 1$ such that
$$\Phi\left(\|\nabla u\|_{L^\Phi(\Omega)}\right)\Phi\left(\frac{|u(x)|}{\|\nabla u\|_{L^\Phi(\Omega)}}\right)\leq C_5\Phi\left(|\nabla u(x)|\right),\;\;\;\forall\,u\in \mathcal{C}_c^1(\Omega),\, x\in \Omega.$$ Integrate both sides of the above inequality over $\Omega$ and use  $(iii)$ of Proposition \ref{prop2-pell} to yield  
\begin{equation*}
   \Phi\left(\|\nabla u\|_{L^\Phi(\Omega)}\right)=\Phi\left(\|\nabla u\|_{L^\Phi(\Omega)}\right) \int_\Omega\Phi\left(\frac{|u(x)|}{\|\nabla u\|_{L^\Phi(\Omega)}}\right)dx\leq  C_5\int_\Omega\Phi\left(|\nabla u(x)|\right)dx.
\end{equation*}
Now apply $\Phi^{-1}$ on both sides of the above inequality and use \eqref{eqn----3} to get  \eqref{eqn2-lem3.1}. This completes the proof.
\end{proof}
 \end{lem}

  \subsection{Muckenhoupt condition}
  We recall the Muckenhoupt type necessary and sufficient condition involving the Young function obtained by Lai, see \cite[Theorem 5]{LQ}. For further readings on these inequalities, we refer to \cite[Chapter 11]{kufner2007}.

  \begin{prop}\label{mucken2}
Let $b\in (0,\infty]$  and $\Phi,\Psi$ be Young functions such that $\Psi\in \Delta_2$  and $\Psi\circ\Phi^{-1}$ is super-additive. 
Let $w,v$ be non-negative locally integrable functions on $(0,b)$ with $v>0$. Then 
 \begin{equation}\label{eq18}
\Psi^{-1}\left(\int_0^b\Psi\left(\left|\int_t^bf(s) ds\right|\right)w(t) dt \right)\leq C\Phi^{-1}\left(\int_0^b\Phi(|f(t)|)v(t) dt\right)
 \end{equation} 
holds for all measurable function $f$ on $(0,b)$ if and only if 
 \begin{equation}\label{eq19}
     \Psi^{-1}\left(\Psi\left(\frac{1}{\epsilon }\left\|1/v\right\|_{L^{\tilde{\Phi},\epsilon v}((r,b))}\right) \int_0^rw(t)dt\right)\leq D\Phi^{-1}\left(\frac{1}{\epsilon}\right),
 \end{equation}
holds for all $\epsilon>0$ and for all $r\in (0, b)$.
 
 In addition, for
the best constants $C$ and $D$, there exist positive constants $\alpha_1,\,\alpha_2,$ and $M$ depending only on $\Phi$ and $\Psi$ such that $D\leq C\leq M\max\{D^{\alpha_1},D^{\alpha_2}\}$.
 \end{prop}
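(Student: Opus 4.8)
This is Lai's criterion for the weighted modular Hardy inequality, and the plan is to follow the classical scheme for (weighted) Hardy inequalities, carried over to the Orlicz/modular setting; the two-sided comparison $D\le C\le p(D)$ of the optimal constants will drop out of the two directions. I may assume $f\ge 0$ throughout, and by truncating $f$ and using monotone convergence — both sides of \eqref{eq18} increase under $f\uparrow$, and $\Psi\in\Delta_2$ lets the limit pass through $\Psi$ — I may also take $f$ bounded with compact support in $(a,b)$, so that $F(t):=\int_t^b f(s)\,ds$ is continuous, nonincreasing, with $F(b^-)=0$ and $F(a^+)<\infty$.

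\textbf{Necessity ($\eqref{eq18}\Rightarrow\eqref{eq19}$).} Fix $r\in(a,b)$, $\epsilon>0$, and test \eqref{eq18} against a function concentrated on $(r,b)$ that nearly saturates the Luxemburg norm $\|1/v\|_{L^{\tilde\Phi,\epsilon v}((r,b))}$: by Hölder's inequality (Proposition \ref{prop2-pell}(i)) and the equivalence, up to the factor $2$, between the Luxemburg norm and the associate (``Orlicz'') norm, choose $k\ge 0$ with $\operatorname{supp}k\subset(r,b)$, $\int_r^b\Phi(k)\,\epsilon v\,ds\le 1$, and $\epsilon\int_r^b k\,ds\ge \tfrac12\|1/v\|_{L^{\tilde\Phi,\epsilon v}((r,b))}$ (using $(1/v)v=1$). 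With $f=k\chi_{(r,b)}$ one has $\int_t^b f=\int_r^b k=:I$ for every $t\le r$, while $\int_a^b\Phi(f)v=\epsilon^{-1}\int_r^b\Phi(k)\,\epsilon v\le\epsilon^{-1}$; discarding the part of the left side over $(r,b)$, \eqref{eq18} gives $\Psi^{-1}\!\big(\Psi(I)\int_a^r w\big)\le C\,\Phi^{-1}(\epsilon^{-1})$. Since $\tfrac1\epsilon\|1/v\|_{L^{\tilde\Phi,\epsilon v}((r,b))}\le 2I$, inequalities \eqref{eqn-2}--\eqref{eqn-3} for $\Psi\in\Delta_2$ convert this into \eqref{eq19} with $D$ bounded by $C$ times a constant depending only on $\Psi$ (and, with the extremal test function, by $C$ itself).

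\textbf{Sufficiency ($\eqref{eq19}\Rightarrow\eqref{eq18}$).} Rescaling $f$ by a constant, I may normalise $\int_a^b\Phi(f)v\,ds\le 1$; because $\Phi$ is not assumed homogeneous this rescaling contributes a piece-wise power factor, to be absorbed into the final constant at the end. Pick points $\dots<x_{k+1}<x_k<\dots$ (with $k$ ranging over a truncated subinterval of $\mathbb Z$, the endpoint cases being covered by the truncation already performed) such that $F(x_k)=2^k$; this is possible since $F$ is continuous and strictly decreasing on $\{f>0\}$. On $[x_{k+1},x_k]$ we have $2^k\le F(t)\le 2^{k+1}$ and $\int_{x_{k+1}}^{x_k}f\,ds=2^k$, whence
\[
\int_a^b\Psi(F)\,w\,dt=\sum_k\int_{x_{k+1}}^{x_k}\Psi(F)\,w\,dt\le\sum_k\Psi(2^{k+1})\int_a^{x_k}w\,dt .
\]
For each $k$ choose $\epsilon_k>0$ so that $\epsilon_k^{-1}\|1/v\|_{L^{\tilde\Phi,\epsilon_k v}((x_k,b))}$ is comparable to $2^{k+1}$; then \eqref{eq19} yields $\Psi(2^{k+1})\int_a^{x_k}w\le\Psi\!\big(D\,\Phi^{-1}(\epsilon_k^{-1})\big)$ up to a $\Delta_2$ factor, while Hölder's inequality in $L^{\Phi,\epsilon_k v}\times L^{\tilde\Phi,\epsilon_k v}$ on the block $(x_{k+1},x_k)$, together with the calibration of $\epsilon_k$ and $\int_{x_{k+1}}^{x_k}f=2^k$, gives $\Phi^{-1}(\epsilon_k^{-1})\lesssim\Phi^{-1}\!\big(\int_{x_{k+1}}^{x_k}\Phi(f)v\,ds\big)$. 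Combining these, using \eqref{eqn-2} for $\Psi\in\Delta_2$ to move the factor $D$ out as a piece-wise power $p_0(D)$, and then the defining summation property of $\Phi<<\Psi$ (Definition \ref{dfn1}),
\[
\int_a^b\Psi(F)\,w\,dt\lesssim p_0(D)\sum_k\Psi\!\circ\!\Phi^{-1}\!\Big(\int_{x_{k+1}}^{x_k}\Phi(f)v\,ds\Big)\le p_0(D)\,C_{<<}\,\Psi\!\circ\!\Phi^{-1}\!\Big(\int_a^b\Phi(f)v\,ds\Big).
\]
Applying $\Psi^{-1}$, undoing the normalisation, and composing the piece-wise power factors, one reads off \eqref{eq18} with $C\le p(D)$ for a monotone increasing piece-wise power function $p$ built from the $\Delta_2$-exponent $P_\Psi$ and the power bounds \eqref{eqn-3} for $\Phi^{-1},\Psi^{-1}$.

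\textbf{Main obstacle.} The real work is all in the sufficiency direction. First, one must calibrate $\epsilon_k$ so that, \emph{simultaneously}, $\epsilon_k^{-1}\|1/v\|_{L^{\tilde\Phi,\epsilon_k v}((x_k,b))}$ matches $2^{k+1}$ and $\Phi^{-1}(\epsilon_k^{-1})$ is controlled by $\Phi^{-1}$ of the modular of $f$ over a single block — a genuinely two-sided constraint on a single parameter. Second, and more seriously, the Muckenhoupt condition \eqref{eq19} feeds in the \emph{tail} weight $\int_a^{x_k}w$ and the \emph{tail} of $1/v$ over $(x_k,b)$, whereas the right-hand side of \eqref{eq18} only sees the \emph{block} modular over $(x_{k+1},x_k)$; bridging this gap requires a careful (e.g. ``skip-one-index'' or overlapping) choice of the $x_k$ so that consecutive tails are comparable. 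Once the selection is set up correctly, the remaining steps are bookkeeping with $\Delta_2$ via \eqref{eqn-2}--\eqref{eqn-3} and with the Luxemburg-norm inequalities of Proposition \ref{prop2-pell}.
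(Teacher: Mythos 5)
First, a point of reference: the paper itself does not prove this proposition at all --- it is imported verbatim from Lai \cite[Theorem 5]{LQ} --- so your attempt can only be measured against the standard proof there, whose architecture (near-extremal duality test functions for necessity; dyadic levels $F(x_k)=2^k$, block H\"older, the condition at the points $x_k$, and the $\Phi<<\Psi$ summation for sufficiency) your outline does follow. The genuine gap is in the sufficiency direction, at exactly the point you label the ``main obstacle'' and then leave unresolved. Since $F(t)=\int_t^b f\,ds$ is nonincreasing, your levels satisfy $x_{k+1}<x_k$, so the block $(x_{k+1},x_k)$ on which you apply H\"older lies to the \emph{left} of $x_k$ and is disjoint from $(x_k,b)$. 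Consequently no calibration of the single parameter $\epsilon_k$ can tie $\epsilon_k^{-1}\|1/v\|_{L^{\tilde\Phi,\epsilon_k v}((x_k,b))}\approx 2^{k+1}$ to the block quantity $\int_{x_{k+1}}^{x_k}\Phi(f)v\,ds$: the two live on disjoint intervals, and the asserted step $\Phi^{-1}(\epsilon_k^{-1})\lesssim\Phi^{-1}\bigl(\int_{x_{k+1}}^{x_k}\Phi(f)v\,ds\bigr)$ has no justification. If instead you run H\"older over the whole tail $(x_k,b)$, you obtain the tail modular $\int_{x_k}^{b}\Phi(f)v\,ds$, and the resulting series of tails is not controlled by $\int_a^b\Phi(f)v\,ds$, so the appeal to Definition \ref{dfn1} collapses. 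Flagging this as an obstacle is not the same as closing it; as written, $\eqref{eq19}\Rightarrow\eqref{eq18}$ is not proved.

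The repair is standard and also dissolves the ``two-sided calibration'' you worry about: pair $x_k$ with the block immediately to its \emph{right} and let that block define $\epsilon_k$, namely $\epsilon_k^{-1}:=\int_{x_k}^{x_{k-1}}\Phi(f)v\,ds$, so that $\|f\|_{L^{\Phi,\epsilon_k v}((x_k,x_{k-1}))}\le 1$. H\"older on $(x_k,x_{k-1})$ with $\int_{x_k}^{x_{k-1}}f\,ds=2^{k-1}$, together with the monotonicity of the Luxemburg norm under enlarging the interval (note $(x_k,x_{k-1})\subset(x_k,b)$), gives $\epsilon_k^{-1}\|1/v\|_{L^{\tilde\Phi,\epsilon_k v}((x_k,b))}\ge 2^{k-2}$; then \eqref{eq19} at $r=x_k$, $\epsilon=\epsilon_k$ yields $\Psi(2^{k-2})\int_a^{x_k}w\le\Psi\bigl(D\,\Phi^{-1}(\epsilon_k^{-1})\bigr)$. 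Now the arguments of $\Psi\circ\Phi^{-1}$ are modulars over pairwise disjoint blocks, \eqref{eqn-2} for $\Psi$ absorbs both the shift from $2^{k+1}$ to $2^{k-2}$ and the constant $D$ as a piece-wise power, $\Phi<<\Psi$ sums the series, and applying $\Psi^{-1}$ with \eqref{eqn-3} gives $C\le p(D)$; no normalisation $\int_a^b\Phi(f)v\le 1$ is needed (nor is such a rescaling harmless for inhomogeneous $\Phi$, so it is better avoided). Finally, your necessity argument only yields $D\le c(\Psi)\,C$, not the stated $D\le C$: to get the exact bound, keep the supremum over all $k\ge 0$ with $\int_r^b\Phi(k)\,\epsilon v\,ds\le 1$ before applying $\Psi$ and $\Psi^{-1}$; that supremum of $\epsilon\int_r^b k\,ds$ is the Orlicz (dual) norm of $1/v$ in $L^{\tilde\Phi,\epsilon v}((r,b))$, which dominates the Luxemburg norm, so the factor $\tfrac12$ and the ensuing $\Delta_2$ loss disappear.
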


\section{Admissible function spaces in $\mathcal{H}_{\Phi,\Phi}(\Omega).$}
In this section, we prove some important propositions, Theorem \ref{thm:3}, Theorem \ref{larentzthm}, and Theorem \ref{cor}. Recall that
\begin{equation*}
     \Phi_N(t)= \int_0^ts^{N^\prime-1}\left(H_\Phi^{-1}\left(s^{N^\prime}\right)\right)^{N^\prime}  ds\; \;\; \text{for}\;t\geq 0, 
      \end{equation*}
 where $H_\Phi^{-1}$ is the inverse of $H_\Phi(t)=\displaystyle\int_0^t\frac{\tilde{\Phi}(s)}{s^{1+N^\prime}}ds$. In the following proposition, we provide a sufficient condition on $\Phi$ so that $B_\Phi=\Phi_N\circ\Phi^{-1}$ is a Young function.
\begin{prop}\label{B-Phi-Young} Let $\Phi$ be a Young function such that $\frac{t\varphi^\prime(t)}{\varphi(t)}\leq \frac{Np^-_\Phi}{N-p^-_\Phi}-1$ holds for all $t>0$. Then $B_\Phi$ is a Young function. 
\begin{proof} To prove $B_\Phi$ is a Young function, it is enough to show that 
$B_\Phi^{\prime\prime}(t)\geq 0$ (\cite[Theorem 1.1]{Krasn1961}). By direct computations, we easily obtain $$B_\Phi^{\prime\prime}(t)=\frac{\Phi_N^{\prime\prime}(s)\varphi(s)-\Phi_N^{\prime}(s)\varphi^\prime(s)}{\varphi(s)^3},\;\;\;\text{where}\,\,s=\Phi^{-1}(t).$$ Thus, $$B_\Phi^{\prime\prime}(t)\geq 0,\; \forall\, t> 0 \text{ if and only if } \dfrac{\Phi_N^{\prime\prime}(t)}{ \Phi_N^{\prime}(t)}\geq \dfrac{\varphi^\prime(t)}{\varphi(t)},\,\, \forall\,t>0.$$
It is not difficult to see that
\begin{align*}
    \Phi_N^\prime(t)&=t^{N^\prime-1}\left(H_\Phi^{-1}(t^{N^\prime})\right)^{N^\prime},\\\Phi_N^{\prime\prime}(t)&=(N^\prime-1)t^{N^\prime-2}\left(H_\Phi^{-1}(t^{N^\prime})\right)^{N^\prime}+\left(N^\prime t^{N^\prime-1}\right)^2\frac{\left(H_\Phi^{-1}(t^{N^\prime})\right)^{N^\prime-1}}{H_\Phi^\prime\left(H_\Phi^{-1}(t^{N^\prime})\right)}.
\end{align*} Therefore, 
\begin{equation}\label{eqn1-B}
    \frac{\Phi_N^{\prime\prime}(t)}{\Phi_N^{\prime}(t)}=\frac{N^\prime-1}{t}+\frac{(N^\prime)^2 t^{N^\prime-1}}{H_\Phi^\prime\left(H_\Phi^{-1}(t^{N^\prime})\right)H_\Phi^{-1}(t^{N^\prime})}.
\end{equation} 
Since $(p^-_\Phi)^\prime\geq (p^+_\Phi)^\prime,$ using \eqref{eqn-delta2-2*} we get 
 \begin{equation*}
    \frac{\tilde{\Phi}(sr)}{r^{1+N^\prime}}\leq s^{(p^-_\Phi)^\prime}\frac{\tilde{\Phi}(r)}{r^{1+N^\prime}},\;\;\;\forall\,s> 1,\,\forall\,r> 0.
 \end{equation*}
 Next, we integrate both sides of the above inequality over  $(0,\tau)$ with respect to $r$ and  use the definition of $H_\Phi$ to get  
\begin{equation*}
 s^{N^\prime}H_\Phi(s\tau)\leq s^{(p^-_\Phi)^\prime} H_\Phi(\tau),\;\;\;\forall\,s> 1,\,\forall\,\tau> 0.
\end{equation*}
From the above inequality, we can deduce that
$$\frac{H_\Phi(s\tau)-H_\Phi(\tau)}{s-1}\leq \frac{s^{(p^-_\Phi)^\prime-N^\prime}-1}{s-1}H_\Phi(\tau),\;\;\;\forall\,s> 1,\,\forall\,\tau>0.$$  Letting $s\to 1$  we have $$ \tau H_\Phi^\prime(\tau)\leq \left((p^-_\Phi)^\prime-N^\prime\right)H_\Phi(\tau),\;\;\;\forall\,\tau>0.$$ 
Now  using the above inequality with $\tau=H_\Phi^{-1}(t^{N^\prime})$ in \eqref{eqn1-B} we obtain
\begin{equation*}
    \frac{\Phi_N^{\prime\prime}(t)}{\Phi_N^{\prime}(t)}\geq \frac{N^\prime-1}{t}+\frac{(N^\prime)^2 }{t((p^-_\Phi)^\prime-N^\prime)}=\frac{1}{t}\left(\frac{Np^-_\Phi}{N-p^-_\Phi}-1\right),\;\;\;\forall\,t>0. 
\end{equation*} 
Hence,  using $\frac{t\varphi^\prime(t)}{\varphi(t)}\leq \frac{Np^-_\Phi}{N-p^-_\Phi}-1$ we get $B_\Phi^{\prime\prime}\geq 0$, and so the proof is complete.
\end{proof}
\end{prop}
\begin{rem}
    Let $\Phi_*$ be the {\it{Sobolev conjugate}} of $\Phi$ (\cite[Page 248]{Adams1975}). Under the similar assumptions as given in proposition \ref{B-Phi-Young}, it is known that $\Phi^*\circ\Phi^{-1}$ is a Young function, see \cite[Lemma 8]{salort2023}.
 \end{rem}
\begin{prop}\label{prop-main*}
Let $\Omega$ be an open subset of $\mathbb{R}^N$, and $\Phi$ be a Young function such that  $B_\Phi$ is a Young function and $\Phi\in\Delta^\prime$. Assume that $\Phi$ satisfies \eqref{H1} when $|\Omega|=\infty$. Then there exists  $C=C(N,\Phi)>0$  so that $$\|\Phi(|u|)\|_{L^{B_\Phi}(\Omega)}\leq C\Phi\left(\|\nabla u\|_{L^\Phi(\Omega)}\right), \;\;\; \forall\,u\in \mathcal{C}_c^1(\Omega).$$ 
\begin{proof}
 Let $C\geq 1$ be as given in  \eqref{eqn-5}. Now for $u\in \mathcal{C}_c^1(\Omega)$ and  $x\in \Omega$, we have
\begin{align*} 
     \Phi^{-1}\left(\frac{\Phi(|u(x)|)}{\Phi\left(C\|u\|_{L^{\Phi_N}(\Omega)}\right)} \right)&=\frac{\Phi^{-1}(\Phi(C\|u\|_{L^{\Phi_N}(\Omega)}))}{C\|u\|_{L^{\Phi_N}(\Omega)}} \Phi^{-1}\left(\frac{\Phi(|u(x)|)}{\Phi(C\|u\|_{L^{\Phi_N}(\Omega)})} \right)\\&\leq \frac{1}{\|u\|_{L^{\Phi_N}(\Omega)}} \Phi^{-1}\left(\Phi(C\|u\|_{L^{\Phi_N}(\Omega)})\frac{\Phi(|u(x)|)}{\Phi(C\|u\|_{L^{\Phi_N}(\Omega)})} \right)=\frac{|u(x)|}{\|u\|_{L^{\Phi_N}(\Omega)}},
 \end{align*}
where the inequality follows from \eqref{eqn-5}. Therefore, by noting  $B_\Phi=\Phi_N\circ\Phi^{-1}$ is a Young function and using $(iii)$ of Proposition \ref{prop2-pell} we get
\begin{equation*}
  \int_{\Omega} B_\Phi\left(\frac{\Phi(|u(x)|)}{\Phi(C\|u\|_{L^{\Phi_N}(\Omega)})} \right)dx\leq \int_{\Omega} \Phi_N\left(\frac{|u(x)|}{\|u\|_{L^{\Phi_N}(\Omega)}} \right)dx=1.
  \end{equation*}
 This gives
 $\|\Phi(|u|)\|_{B_\Phi}\leq \Phi\left(C\|u\|_{L^{\Phi_N}(\Omega)}\right).$
Moreover, by Theorem \ref{thm-cianchi}  we get
$$\|u\|_{L^{\Phi_N}(\Omega)} \leq  C_1\|\nabla u\|_{L^\Phi(\Omega)},\;\;\;\forall\,u\in \mathcal{C}_c^1(\Omega),$$ for some  $C_1>0.$  Consequently, using \eqref{eqn-delta2-2}
 we obtained the required inequality: $$\|\Phi(|u|)\|_{B_\Phi}\leq \Phi\left(CC_1\|\nabla u\|_{L^\Phi(\Omega)}\right)\leq \max\left\{(CC_1)^{p^-},(CC_1)^{p^+}\right\}\Phi\left(\|\nabla u\|_{L^\Phi(\Omega)}\right).$$ 
    \end{proof}
\end{prop}

\noindent\textbf{Proof of Theorem \ref{thm:3}:} Let $u\in \mathcal{C}^1_c(\Omega)$ and $g\in L^{\tilde{B}_\Phi}(\Omega).$
Then, the H\"older's inequality and Proposition \ref{prop-main*} gives
 \begin{equation*}
     \int_{\Omega}|g(x)|\,\Phi(|u(x)|)dx\leq 2\|g\|_{L^{\tilde{B}_\Phi}(\Omega)}\|\Phi(|u|)\|_{L^{B_\Phi}(\Omega)}\leq 2C\|g\|_{L^{\tilde{B}_\Phi}(\Omega)}\Phi\left(\|\nabla u\|_{L^\Phi(\Omega)}\right).
 \end{equation*}
 Replacing $u$ by $u/\|u\|_{L^{\Phi,g}(\Omega)}$ in the above  inequality and using $(iii)$ of Proposition \ref{prop2-pell} we get $$1\leq 2C \|g\|_{L^{\tilde{B}_\Phi}(\Omega)} \Phi\left(\|\nabla u\|_{L^\Phi(\Omega)}/\|u\|_{L^{\Phi,|g|}(\Omega)}\right).$$
Now apply $\Phi^{-1}$ on both sides of the above inequality and use  \eqref{eqn-6} for $\tilde{\Phi}$ to get 
\begin{align*}
  \|u\|_{L^{\Phi,|g|}(\Omega)}\leq C_1\Phi^{-1}\left(\|g\|_{L^{\tilde{B}_\Phi}(\Omega)}\right)\|\nabla u\|_{L^\Phi(\Omega)}
\end{align*}
for some $C_1>0$.
Therefore, by Lemma \ref{rem8*}, there exists $C_2>0$ such that
\begin{equation*}
    \Phi^{-1}\left(\int_{\Omega}|g(x)|\,\Phi(|u(x)|)dx \right)\leq C_2C_1\Phi^{-1}\left(\|g\|_{L^{\tilde{B}_\Phi}(\Omega)}\right)\Phi^{-1}\left(\int_{\Omega}\Phi(|\nabla u(x)|) dx \right).
\end{equation*}
Applying $\Phi$ on both sides of the above inequality and using \eqref{deld1} we obtain 
\begin{align*}
   \int_{\Omega}|g(x)|\,\Phi(|u(x)|)dx &\leq  C_3\|g\|_{L^{\tilde{B}_\Phi}(\Omega)}\int_{\Omega}\Phi(|\nabla u(x)|) dx,
\end{align*}
for some $C_3>0$. This completes the proof.
\qed

We prove the following proposition before giving the proof for Theorem \ref{larentzthm}.
\begin{prop}\label{prop-1.3} Let $\Omega$ be an open subset of $\mathbb{R}^N$, and $\Phi$ be a Young function satisfying $\Delta_2$-condition and $p^+_\Phi<N$. Then there exists $C=C(N,\Phi)>0$  so that 
\begin{equation*}
    \int_0^{|\Omega|}\Phi\left(s^{-\frac{1}{N}}u^*(s)\right)ds\leq C\int_{\Omega}\Phi(|\nabla u(x)|)dx,\;\;\;\forall\,u\in \mathcal{C}_c^1(\Omega).
\end{equation*}
\begin{proof}
By \eqref{eqn-delta2-2}, we obtain $\Phi(s)\leq s^{p^+_\Phi}\Phi(1)$ for all $s\in [1,\infty)$ and  $\Phi(s)\geq s^{p^+_\Phi}\Phi(1)$ for all $s\in (0,1)$. Now using  $N>p^+_\Phi$, we conclude that
\begin{equation*}
    \int_{1}^{\infty}\left(\frac{s}{\Phi(s)}\right)^{1/(N-1)}ds = \infty,\;\;\;
    \int_{0}^{1}\left(\frac{s}{\Phi(s)}\right)^{1/(N-1)}ds < \infty.
\end{equation*}
Moreover, from \eqref{eqn-delta2-2} we get that 
the upper Matuszewska-Orlicz index (see \cite[equation 1.24]{Cianchi2004})
\begin{equation*}\label{index}
      I(\Phi)=\lim_{t\to \infty}\frac{\log\left(\sup_{s>0}\frac{\Phi(t s)}{\Phi(s)}\right)}{\log t}\leq p^+_\Phi<N.
      \end{equation*}
Thus, by \cite[Remark 1.2]{Cianchi2004} and \cite[(I) of Proposition 5.2]{Cianchi2004}, there exists $C>0$ such that
\begin{equation*}
      \int_0^{|\Omega|}\Phi\left(s^{-\frac{1}{N}}u^*(s)\right)ds\leq \int_{\Omega}\Phi(C|\nabla u(x)|)dx,\;\;\;\,\forall\,u\in \mathcal{C}_c^1(\Omega).
  \end{equation*}
  Hence, the result follows from \eqref{eqn-delta2-2}.
\end{proof}
\end{prop}

 \noindent\textbf{Proof of Theorem \ref{larentzthm}:} 
Let $u\in \mathcal{C}^1_c(\Omega).$  Since $\tilde{\Phi}\in \Delta^\prime$, by  \eqref{eqn-7*} there exists $C\geq 1$ such that  
     \begin{equation*}\label{eqn::8}
       \Phi(s^{-\frac{1}{N}})\Phi\left(u^*(s)\right)\leq C\Phi(s^{-\frac{1}{N}}u^*(s)),\;\;\;\forall\,s\in(0,|\Omega|).
     \end{equation*}
Now using  Proposition \ref{prop1} and $g^*(s)\leq g^{**}(s)$, we get
\begin{align*}
\int_{\Omega}|g(x)|\,\Phi(|u(x)|)dx&\leq\int_0^{|\Omega|}g^{*}(s)\Phi(u^*(s))ds\nonumber= \int_0^{|\Omega|}\frac{g^{*}(s)}{\Phi(s^{-\frac{1}{N}})}\Phi(s^{-\frac{1}{N}})\Phi(u^*(s))ds\nonumber\\&\leq  C\sup_{0<s<|\Omega|}\left\{\frac{g^{**}(s)}{\Phi(s^{-\frac{1}{N}})}\right\}\int_0^{|\Omega|}\Phi(s^{-\frac{1}{N}}u^*(s))ds\\&= C \|g\|_{L^{\Phi,\infty}(\Omega)}\int_0^{|\Omega|}\Phi(s^{-\frac{1}{N}}u^*(s))ds.
   \end{align*}
Thus, by Proposition \ref{prop-1.3} we obtain
\begin{align*}
      \int_{\Omega}|g(x)|\,\Phi(|u(x)|)dx&\leq CC_1 \|g\|_{L^{\Phi,\infty}(\Omega)}\int_{\Omega}\Phi\left(|\nabla u(x)|\right)dx,
\end{align*}
for some $C_1>0$. This completes the proof.
\qed

Next, we prove a variant of Proposition \ref{mucken2} for $\Phi=\Psi$ with $\tilde{\Phi}\in\Delta_2$. The dual version of the following lemma is established in \cite[Theorem 1]{Lai1993}.
 \begin{lem}\label{mucken1}
 Let $b\in (0,\infty]$ and $\Phi$ be a Young function such that $\Phi,\tilde{\Phi}\in\Delta_2$. Let $w,v$  be locally integrable  functions on $(0,b)$  with $v>0,w>0$ a.e. on $(0,b)$.
Then 
\begin{equation}\label{eq4}
    \int_0^{b}\Phi\left(\left|\int_t^bf(s) ds\right|\right)w(t)dt \leq B_1  \int_0^{b}\Phi(|f(t)|)v(t)dt
\end{equation}
holds for all measurable function $f$ on $(0,b)$ if and only if \begin{equation}\label{eq5} 
 \left( \int_{0}^{t}\epsilon w(s) ds \right)   \varphi\left(\int_{t}^{b}\tilde{\varphi}\left(\frac{1}{\epsilon v(s)}\right)ds\right)\leq B_2,\;\;\;\forall\;\epsilon>0,\,\forall\,t\in (0,b).
 \end{equation}
 Furthermore, for the best constants $B_1$ and $B_2$, there exist positive constants $\alpha_1,\,\alpha_2$, and $C$ depending only on $\Phi$ such that $B_1\leq C\max\{B_2^{\alpha_1},B_2^{\alpha_2}\}$.
 \begin{proof}
First, we derive some inequalities required to prove this lemma. Let $\beta:(0,\infty)\times (0,\infty)\rightarrow (0,\infty)$ such that 
\begin{equation}\label{eqn-beta}
    \beta(r,s)=\frac{1}{\tilde{\Phi}\left(\frac{1}{r s}\right) r s}.
\end{equation}
Using \eqref{eqn-1} (for 
$\tilde{\Phi}$), we get  $\tilde{\Phi}(t)\asymp t\tilde{\varphi}(t)$.
This gives
\begin{equation}\label{eq10}
  \frac{1}{\beta (r,s)}\asymp \tilde{\varphi}\left( \frac{1}{r  s}\right).
 \end{equation}
 Furthermore, using \eqref{eqn-11}  we have
\begin{align*}
   \frac{1}{\beta(r,s)  r s}\leq  \tilde{\Phi}^{-1}\left(\frac{1}{\beta(r,s)  r s}\right)\Phi^{-1}\left(\frac{1}{\beta(r,s) r s}\right)\leq  \frac{2}{\beta(r,s) r s}.
\end{align*}
Now by noticing $ \tilde{\Phi}^{-1}\left(\frac{1}{\beta(r,s) r s}\right)=\frac{1}{r s }$, we obtain 
\begin{equation}\label{eqn::9}
     \frac{1}{\beta(r,s)}\leq  \Phi^{-1}\left(\frac{1}{\beta(r,s)r s}\right)\leq  \frac{2}{\beta(r,s)}.
\end{equation}
 Let $\Phi$, $w$, and $v$ be as given above and denote $z(t)=\int_0^t w(s)ds.$  By taking $\Phi=\Psi$ in  Proposition \ref{mucken2}, we see that \eqref{eq4} holds if and only if there exists  $C=C(\Phi)>0$ such that
 \begin{equation}\label{eq6}
     \frac{1}{\epsilon}\left\|\frac{1}{v}\right\|_{ L^{\tilde{\Phi},\epsilon v}((t,b))}\leq C\Phi^{-1}\left(\frac{1}{\epsilon z(t)}\right),\;\;\; \forall \,\epsilon>0,\,\forall\,t\in (0,b).
 \end{equation}
Our proof will be complete if we show that \eqref{eq5} and \eqref{eq6} are equivalent. First, we 
assume that \eqref{eq5} holds.  
For  $t\in (0,b)$ and $r>0$, using the definition of $\tilde{\varphi}$ and \eqref{eqn-delta2-2-2} (for $\tilde{\varphi}$), we get 
 \begin{equation}\label{eq7}
     \int_t^{b}\tilde{\varphi}\left(\frac{1}{r v(s)}\right)ds\leq \tilde{\varphi}\left(\frac{B_2}{r z(t)}\right)\leq C\max\{B_2^l,B_2^m\} \tilde{\varphi}\left(\frac{1}{r z(t)}\right)
 \end{equation} 
where $l=p_{\tilde{\Phi}}-1,$ $m=q_{\tilde{\Phi}}-1$. 
Combining \eqref{eq10} and \eqref{eq7}, we obtain
 \begin{align*}
   \beta(r,z(t)) \int_t^{b}\tilde{\varphi}\left(\frac{1}{r v(s)}\right)ds\leq C_1,
 \end{align*} 
for some $C_1>0.$ Now using $\tilde{\Phi}(t)\leq t\tilde{\varphi}(t)$ (see \eqref{eqn-9}), we get
\begin{equation*}
    \beta(r,z(t))r \int_t^{b} \tilde{\Phi}\left(\frac{1}{r v(s)}\right) v(s)\, ds\leq \beta(r,z(t))\int_t^{b} \tilde{\varphi}\left(\frac{1}{r v(s)}\right) ds \leq  C_1.
\end{equation*} 
Thus, by the definition of the Luxemburg norm, we get a $C_2>0$ and then using \eqref{eqn::9}, we obtain $$\left\|\frac{1}{r v}\right\|_{L^{\tilde{\Phi}, \beta(r,z(t)) r v}((t,b))}\leq C_2\leq C_2 \beta(r,z(t))\Phi^{-1}\left(\frac{1}{ \beta(r,z(t)) r z(t)}\right).$$
 Notice $\beta(r,z(t))r $ is a continuous function of $r$ and takes all the values in $(0,\infty)$. Thus, for any given $\epsilon>0$, we can choose $r$ such that $\beta(r,z(t)) r=\epsilon$. This concludes \eqref{eq6}.
 
\noindent Conversely, assume that \eqref{eq6} holds.  For  $t\in (0,b)$ and $r>0$  replacing $\epsilon$ by $r\beta(r,z(t))$ in \eqref{eq6} and using \eqref{eqn::9}, we get  $$\frac{1}{r\beta(r,z(t))}\left\|\frac{1}{v} \right\|_{ L^{\tilde{\Phi},r\beta(r,z(t)) v}((t,b))}\leq  C\Phi^{-1}\left(\frac{1}{r\beta(r,z(t)) z(t)}\right)\leq \frac{2C}{\beta(r,z(t))}.$$  Therefore, the definition of the Luxemburg norm gives $$\int_t^{b} \tilde{\Phi}\left(\frac{1}{2Cr v(s)}\right)r\beta(r,z(t)) v(s) ds\leq 1.$$ 
Hence, by using \eqref{eqn-delta2-2}, we conclude that
 \begin{align}
     \int_t^{b} \tilde{\Phi}\left(\frac{1}{r v(s)}\right) r v(s) ds\leq C_3\int_t^{b} \tilde{\Phi}\left(\frac{1}{2Cr v(s)}\right) r v(s) ds\leq \frac{C_3}{\beta(r,z(t))},\label{asbefore}
 \end{align}
 where $C_3=\max\left\{(2C)^{p^-_{\tilde{\Phi}}},(2C)^{p^+_{\tilde{\Phi}}}\right\}.$
Now use $\tilde{\Phi}(t)\asymp t\tilde{\varphi}(t)$, \eqref{asbefore}, and  \eqref{eq10}  to get
\begin{align*}
   \int_t^{b}\tilde{\varphi}\left(\frac{1}{r v(s)}\right)ds\leq C_4\int_t^{b}\tilde{\Phi}\left(\frac{1}{r v(s)}\right)r v(s)ds\leq \frac{C_3C_4}{\beta(r,z(t))}\leq C_5\tilde{\varphi}\left( \frac{1}{r  z(t)}\right)
\end{align*}
for some $C_4,C_5>0.$ Finally, we apply $\varphi$ on both sides of the above inequality and use \eqref{eqn-delta2-2-2}  to obtain the required inequality \eqref{eq5}. The relation between the best constants $B_1, B_2$ can be deduced from the relation between the best constants in Proposition \ref{mucken2}.
 \end{proof}
 \end{lem}

\noindent\textbf{Proof of Theorem \ref{cor}:}
 First, we  show that \eqref{eq5} is satisfied for $w(s)=g^*(s)$ and $v(s)=1/\Phi\left(\zeta(s)\right)$ with $\zeta(s)=s^{(1-N)/N}$. 
 Recall that $ 
Q_\Phi(s)=\Phi(\zeta(s))\tilde{\Phi}\left(\frac{1}{\Phi(\zeta(s))}\right).
 $
Using \eqref{eqn-9} for $\tilde{\Phi}$ we get
 \begin{equation}\label{eq2-thm5}
     Q_\Phi(s)\leq \Phi(\zeta(s))\frac{1}{\Phi(\zeta(s))}\tilde{\varphi}\left(\frac{1}{\Phi(\zeta(s))}\right)= \tilde{\varphi}\left(\frac{1}{\Phi\left(\zeta(s)\right)}\right).
 \end{equation}
 From  \eqref{eqn-7*} using \eqref{eqn-1} (for $\tilde{\Phi}$) and \eqref{eqn-delta2-2-2} (for $\tilde{\varphi}$), we get a constant $C\geq 1$ such that
 $$\tilde{\varphi}(s)\tilde{\varphi}(t)\leq \tilde{\varphi}(Cst),$$ for all $s,t\geq 0.$   Therefore, 
\begin{equation*}
\tilde{\varphi}\left(\frac{\Phi(\zeta(s))}{\epsilon}\right)\tilde{\varphi}\left(\frac{1}{\Phi\left(\zeta(s)\right)}\right)\leq \tilde{\varphi}\left(\frac{C}{\epsilon}\right),\;\;\;\forall\,\epsilon>0.  
\end{equation*}
Now we use \eqref{eq2-thm5} to obtain \begin{equation}\label{eq1-thm5}
 \tilde{\varphi}\left(\frac{1}{\epsilon v(s)}\right)=\tilde{\varphi}\left(\frac{\Phi(\zeta(s))}{\epsilon}\right)\leq \tilde{\varphi}\left(\frac{C}{\epsilon}\right)\frac{1}{\tilde{\varphi}\left(1/\Phi\left(\zeta(s)\right)\right)}\leq \tilde{\varphi}\left(\frac{C}{\epsilon}\right)\frac{1}{Q_\Phi(s)}.   \end{equation}
 Moreover,
 \begin{equation*}
   \int_0^tw(s)ds=\int_0^tg^*(s)ds=tg^{**}(t).
 \end{equation*}
Thus, for $t\in (0,|\Omega|)$ and $\epsilon >0$, we use \eqref{eq1-thm5} and  \eqref{eqn-var} to get
\begin{align*}
    \left( \int_{0}^{t}\epsilon w(s) ds \right) \varphi\left(\int_{t}^{|\Omega|}\tilde{\varphi}\left(\frac{1}{\epsilon v(s)}\right)ds\right)&\leq  \epsilon  tg^{**}(t)\varphi\left(\tilde{\varphi}\left(\frac{C}{\epsilon }\right)\int_{t}^{|\Omega|}\frac{1}{Q_\Phi(s)}ds\right)\\&\leq CC_1t g^{**}(t) \varphi\left(\int_t^{|\Omega|}\frac{1}{Q_\Phi(s)}ds\right)\leq CC_1\|g\|_{X_\Phi(\Omega)},
\end{align*} 
for some $C_1>0.$
 Therefore, by Lemma \ref{mucken1}, there exist positive constants $\alpha_1,\,\alpha_2,$ and $M$  such that inequality \eqref{eq4} holds with $B_1=M\max\left\{\|g\|_{X_\Phi(\Omega)}^{\alpha_1},\|g\|_{X_\Phi(\Omega)}^{\alpha_2}\right\}$. Now we take  $f=-\frac{du^*}{ds}$  in \eqref{eq4} to obtain 
\begin{align}\label{eqn-hl2}
      \int_0^{|\Omega|}g^*(s)\Phi(|u^*(s)|) ds 
   \leq B_1\int_0^{|\Omega|}\frac{1}{\Phi\left(s^{-1+1/N}\right)}\Phi\left(-\frac{du^*}{ds}\right)ds,\;\;\;\forall\,u\in \mathcal{C}^1_c(\Omega).
\end{align}
 Furthermore, since $\Phi\in\Delta^\prime$, there exists $C_2\geq 1$ such that (see \eqref{deld1}) 
 \begin{align*}
   \frac{1}{\Phi\left(s^{-1+1/N}\right)}\Phi\left(-\frac{du^*}{ds}\right)&\leq C_2\Phi\left(N\omega_N^\frac{1}{N}s^{1-1/N}\left(-\frac{du^*}{ds}\right)\right),\;\;\;u\in \mathcal{C}^1_c(\Omega),\;s\in (0,|\Omega|).
\end{align*} 
Consequently, by Proposition \ref{prop1} and \eqref{eqn-hl2}, we conclude
\begin{align*}
    \int_{\Omega}|g(x)|\,\Phi(|u(x)|)dx&\leq C_2M\max\left\{\|g\|_{X_\Phi(\Omega)}^{\alpha_1},\|g\|_{X_\Phi(\Omega)}^{\alpha_2}\right\}\int_{\Omega} \Phi(|\nabla u(x)|)dx,\;\;\; \forall\,u\in \mathcal{C}^1_c(\Omega).
    \end{align*}
Now, \eqref{result-1.6} follows from the above inequality replacing $g$ by $\frac{g}{\|g\|_{X_\Phi(\Omega)}}$. 
\qed
\section{Admissible function spaces in $\mathcal{H}_{\Phi,\Psi}(\Omega).$}
 In this section, we prove Theorem \ref{thm-sim}, Theorem \ref{thm-bdd},  Theorem \ref{theorem3}, and Theorem \ref{capacity}. 

\noindent\textbf{Proof of Theorem \ref{thm-sim}:} Since $\Phi$ and $g$ satisfy  \eqref{modulat phi}, the definition of the Luxemburg norm gives
$$\|u\|_{L^{\Phi,|g|}(\Omega)}\leq C\|\nabla u\|_{L^{\Phi}(\Omega)},\;\;\;\forall\,u\in\mathcal{C}^1_c(\Omega),$$ for some $C>0.$
By Theorem \ref{emb-ele}, there exists a constant $C_1>0$ such that $$\|u\|_{L^{\Psi,|g|}(\Omega)}\leq C_1\|u\|_{L^{\Phi,|g|}(\Omega)},\;\;\;\forall\,u\in\mathcal{C}^1_c(\Omega).$$
Therefore, the above two inequalities yield
$\|u\|_{L^{\Psi,|g|}(\Omega)}\leq CC_1\|\nabla u\|_{L^{\Phi}(\Omega)}$ for all $u\in\mathcal{C}^1_c(\Omega)$. Now the proof follows from Lemma \ref{rem8*}.
\qed

\noindent\textbf{Proof of Theorem \ref{thm-bdd}:}  Let $\Phi,$ $\Psi,$ and $g$ be as given in Theorem \ref{thm-bdd}. Applying Theorem \ref{thm-cianchi}, we get
\begin{equation*}
\|u\|_{L^\infty(\Omega)}\leq C\|\nabla u\|_{L^\Phi(\Omega)},\;\,\,\forall \,u\in \mathcal{C}^1_c(\Omega),
\end{equation*}
for some $C>0.$ Using \eqref{eqn-7*} (replacing $\Phi$ by  $\tilde{\Phi}$) we get a   constant $C_1\geq 1$ such that 
\begin{align*}
  \Phi\left(\|u\|_{L^\infty(\Omega)}\right) \Phi\left(\frac{|\nabla u(x)|}{\|\nabla u\|_{L^\Phi(\Omega)}}\right)\leq C_1 \Phi\left(|\nabla u(x)|\frac{\|u\|_{L^\infty(\Omega)}}{\|\nabla u\|_{L^\Phi(\Omega)}}\right)&\leq C_1\Phi(C|\nabla u(x)|),\;\;\;\forall\,x\in \Omega.
\end{align*}
Integrating the above inequality over $\Omega$, and using $(iii)$ of Proposition \ref{prop2-pell} and \eqref{eqn-delta2-2}, we get 
\begin{align*}
\Phi(\|u\|_{L^\infty(\Omega)})=\Phi\left(\|u\|_{L^\infty(\Omega)}\right)\int_{\Omega}\Phi\left(\frac{|\nabla u(x)|}{\|\nabla u\|_{L^\Phi(\Omega)}}\right)dx&\leq C_2\int_{\Omega}\Phi(|\nabla u(x)|)dx,
\end{align*} 
for some $C_2>0.$
Apply $\Phi^{-1}$ on both sides of the above inequality and use \eqref{eqn----3} to get 
\begin{equation}\label{eqn1-thm7}
    \|u\|_{L^\infty(\Omega)}\leq \max\left\{C_2^{1/p^-_\Phi},C_2^{1/p^+_\Phi}\right\}C_3\Phi^{-1}\left(\int_{\Omega}\Phi(|\nabla u(x)|)dx\right).
\end{equation}
Moreover, 
$$\int_{\Omega}|g(x)|\,\Psi(|u(x)|)dx\leq \Psi\left(\|u\|_{L^\infty(\Omega)}\right)\int_{\Omega}|g(x)|dx=\|g\|_{L^1(\Omega)}\Psi\left(\|u\|_{L^\infty(\Omega)}\right).$$
Now we apply $\Psi^{-1}$ and use \eqref{eqn----3} for $\Psi$ to obtain
\begin{equation}\label{eqn-thm7}
\Psi^{-1}\left(\int_{\Omega}|g(x)|\,\Psi(|u(x)|)dx\right)\leq \max\left\{\|g\|^{1/p^-_\Psi}_{L^1(\Omega)},\|g\|^{1/p^+_\Psi}_{L^1(\Omega)}\right\}\|u\|_{L^\infty(\Omega)}.
\end{equation}
Hence, the conclusion follows from \eqref{eqn1-thm7} and \eqref{eqn-thm7}.
 \qed

\noindent\textbf{Proof of Theorem \ref{theorem3}:}
We show that \eqref{eq19} is satisfied for $w(s)=g^*(s)$ and $v(s)=1/\Phi\left(\zeta(s)\right)$ with $\zeta(s)=s^{(1-N)/N}$.
For  $\epsilon>0$, choose $\delta>0$ such that $\tilde{\Phi}\left(\frac{1}{\delta}\right)=\frac{1}{\epsilon}$. Since $\tilde{\Phi}\in \Delta^\prime$, by \eqref{deld1} there exists $C\geq 1$ such that  for all $s,t>0,$
\begin{equation}\label{asbefore-2}
    \tilde{\Phi}\left(\frac{1}{t  v(s)}\right)\leq C\tilde{\Phi}\left(\frac{1}{\delta}\right)\tilde{\Phi}\left(\frac{\delta}{t v(s)}\right)=\frac{C}{\epsilon} \tilde{\Phi}\left(\frac{\delta}{t v(s)}\right).
\end{equation}
For $r\in (0,|\Omega|),$ we take  $t=\left\|\delta / v\right\|_{L^{\tilde{\Phi},Cv}((r, |\Omega|))}$ in \eqref{asbefore-2} and use $(iii)$ of Proposition \ref{prop2-pell} to get 
 $$\int_r^{|\Omega|}\tilde{\Phi}\left(\frac{1}{t  v(s)}\right)\epsilon v(s) ds \leq  \int_r^{|\Omega|}  \tilde{\Phi}\left(\frac{\delta}{t v(s)}\right)Cv(s) ds=1.$$ 
Therefore, the definition of the Luxemburg norm gives
 \begin{equation*}
   \left\|1/ v\right\|_{L^{\tilde{\Phi}, \epsilon v}((r,|\Omega|))}\leq t=\delta\left\|1 / v\right\|_{L^{\tilde{\Phi},Cv}((r, |\Omega|))}=\frac{1}{\tilde{\Phi}^{-1}(1/\epsilon)}\left\|1/ v\right\|_{L^{\tilde{\Phi},Cv}((r,|\Omega|))}.  
 \end{equation*}   
Multiply both sides of the above inequality by $1/\epsilon$ and use \eqref{eqn-11} to get
 \begin{align}\label{eqn::6.1}
    \frac{1}{\epsilon}  \left\|1/ v\right\|_{L^{\tilde{\Phi}, \epsilon v}((r,|\Omega|))}
    \leq \Phi^{-1}(1/\epsilon)\left\|1/ v\right\|_{L^{\tilde{\Phi},Cv}((r,|\Omega|))}.
 \end{align}
Next, we prove the following estimate:
 \begin{equation}\label{eqn-thm1.9}
     \left\|1/ v\right\|_{L^{\tilde{\Phi},C v}((r,|\Omega|))}
 \leq C^2 \left\|\zeta\right\|_{L^{\tilde{\Phi}}((r,|\Omega|))}.
 \end{equation} 
 Since $\tilde{\Phi}\in \Delta^\prime$, for the same constant $C$ as used in \eqref{asbefore-2}, we have
\begin{align*}
   \tilde{\Phi}\left(\frac{1}{\alpha v(s)}\right)= \tilde{\Phi}\left(\frac{\zeta(s)}{\alpha}\cdot\frac{\Phi(\zeta(s))}{\zeta(s)}\right)&\leq C\tilde{\Phi}\left(\frac{\zeta(s)}{\alpha}\right)\tilde{\Phi}\left(\frac{\Phi(\zeta(s))}{\zeta(s)}\right)\\&\leq C\tilde{\Phi}\left(\frac{\zeta(s)}{\alpha}\right)\Phi\left(\zeta(s)\right),\;\;\;\forall\,\alpha,s\in (0,\infty),
 \end{align*}
 where the last inequality follows from \eqref{eqn-10}.
 Multiply the above inequality by $Cv(s)$ to get  
 \begin{equation*}
     \tilde{\Phi}\left(\frac{1}{\alpha v(s)}\right)Cv(s)\leq C^2\tilde{\Phi}\left(\frac{\zeta(s)}{\alpha}\right)\Phi\left(\zeta(s)\right)v(s)=C^2\tilde{\Phi}\left(\frac{\zeta(s)}{\alpha}\right),\;\;\;\forall\,\alpha,s\in (0,\infty).
 \end{equation*}
Integrate both sides of the above inequality over $(r,|\Omega|)$  to obtain
\begin{align*}
    \int_r^{|\Omega|} \tilde{\Phi}\left(\frac{1}{\alpha v(s)}\right)Cv(s)ds&\leq C^2 \int_r^{|\Omega|}\tilde{\Phi}\left(\frac{\zeta(s)}{\alpha}\right)ds\leq \int_r^{|\Omega|}\tilde{\Phi}\left(\frac{C^2\zeta(s)}{\alpha}\right)ds,\;\;\;\forall \,\alpha>0.
\end{align*}
Hence, \eqref{eqn-thm1.9} follows from the definition of the Luxemburg norm.
Now using \eqref{eqn::6.1}, \eqref{eqn-thm1.9}, and  $\Psi$ is in $\Delta'$, we obtain $C_1\ge 1$ so that 
 \begin{align*}
 \Psi\left(\frac{1}{\epsilon}\left\|1/ v\right\|_{L^{\tilde{\Phi},\epsilon v}((r,|\Omega|))}\right)&\le \Psi\left(C^2\Phi^{-1}(1/\epsilon) \left\|\zeta\right\|_{L^{\tilde{\Phi}}((r,|\Omega|))}\right) \nonumber\\ &\leq C_1 \Psi\circ\Phi^{-1}(1/\epsilon)\,\Psi\left(\left\|\zeta\right\|_{L^{\tilde{\Phi}}((r,|\Omega|))}\right)
 \end{align*}
This gives
 \begin{align*}
    \Psi\left(\frac{1}{\epsilon}\left\|1/ v\right\|_{L^{\tilde{\Phi},\epsilon v}((r,|\Omega|))}\right) \int_0^rw(s)ds
    &\leq C_1\Psi\circ\Phi^{-1}(1/\epsilon)\,\,\Psi\left(\left\|\zeta\right\|_{L^{\tilde{\Phi}}((r,|\Omega|))}\right)rg^{**}(r)\\&\leq C_1\Psi\circ\Phi^{-1}(1/\epsilon)\,\,\|g\|_{X_{\Phi,\Psi}(\Omega)},\;\;\;\,\forall\,r\in (0,|\Omega|). 
\end{align*}
Thus, for all $ r\in (0,|\Omega|)$ and for all $\epsilon>0$,  using \eqref{eqn----3},   we get 
\begin{align*}
\Psi^{-1}\left(\Psi\left(\frac{1}{\epsilon}\left\|1/ v\right\|_{L^{\tilde{\Phi},\epsilon v}((r,|\Omega|))}\right) \int_0^rw(s)ds\right)&\leq  C_1^{1/p_\Psi^-}\max\left\{\|g\|^{1/p^-_\Psi}_{X_{\Phi,\Psi}(\Omega)},\|g\|^{1/p^+_\Psi}_{X_{\Phi,\Psi}(\Omega)}\right\}\Phi^{-1}\left(\frac{1}{\epsilon}\right).
\end{align*}
Therefore, \eqref{eq19} holds and  hence, by  Proposition \ref{mucken2}, there exist positive constants  $\alpha_1,\alpha_2$, and $C_2$ so that  \eqref{eq18} holds. In particular, for $u\in \mathcal{C}_c^1(\Omega),$ by taking $f=-\frac{du^*}{ds}$ in \eqref{eq18}, we obtain
 \begin{equation}\label{eqq2}
     \Psi^{-1}\left(\int_0^{|\Omega|}g^*(s)\Psi(|u^*(s)|) ds\right) \leq B\Phi^{-1}\left( \int_0^{|\Omega|}\frac{1}{\Phi\left(s^{-1+1/N}\right)}\Phi\left(-\frac{du^*}{ds}\right)ds\right),
 \end{equation}
 where $B=C_2\max\left\{\|g\|_{X_{\Phi,\Psi}(\Omega)}^{\alpha_1},\|g\|_{X_{\Phi,\Psi}(\Omega)}^{\alpha_2}\right\}$.
Moreover, by \eqref{deld1}  there exists $C_3\geq 1$ such that 
 \begin{align*}
   \frac{1}{\Phi\left(s^{-1+1/N}\right)}\Phi\left(-\frac{du^*}{ds}\right)&\leq C_3\Phi\left(N\omega_N^\frac{1}{N}s^{1-1/N}\left(-\frac{du^*}{ds}\right)\right),\;\;\;u\in \mathcal{C}^1_c(\Omega),\;s\in (0,|\Omega|).
\end{align*} 
Consequently, by  Proposition \ref{prop1} (Hardy-Littlewood and P\'olya-Szeg\"o) and \eqref{eqq2} we get
 \begin{align*}
\Psi^{-1}\left(\int_{\Omega}|g(x)|\,\Psi(|u(x)|)dx\right) &
    \leq BC_3 \Phi^{-1}\left(\int_{\Omega}\Phi(|\nabla u(x)|)dx\right),\;\;\;\forall\, u\in \mathcal{C}_c^1(\Omega).
 \end{align*}
Next we replace $g$ by $g/\|g\|_{X_{\Phi,\Psi}(\Omega)}$ in the above inequality to obtain
 \begin{align*}
\Psi^{-1}\left(\frac{1}{\|g\|_{X_{\Phi,\Psi}(\Omega)}}\int_{\Omega}|g(x)|\,\Psi(|u(x)|)dx\right) &
    \leq C_2C_3\Phi^{-1}\left(\int_{\Omega}\Phi(|\nabla u(x)|)dx\right),\;\;\;\forall\,u\in \mathcal{C}_c^1(\Omega).
 \end{align*}
Now using \eqref{eqn----3} we get \eqref{eqn-thm3}.
 This completes the proof.
\qed

\noindent\textbf{Proof of Theorem \ref{capacity}:}
First, assume that condition $\textit{(i)}$ holds. Thus, 
\begin{equation*}
    \Psi^{-1}\left(\int_{\Omega}|g(x)|\,\Psi(|u(x)|)dx \right)\leq C\Phi^{-1}\left(\int_{\Omega}\Phi(|\nabla u(x)|) dx \right),\,\,\,\forall\,u\in \mathcal{C}^1_c(\Omega).
\end{equation*}
Let $K$ be a compact set and $u\in \mathcal{C}^1_c(\Omega)$ be such that $u(x)\geq 1$, $x\in K$. Thus, by \eqref{eqn-delta2-2}  we get
\begin{align*}
    \Psi(1)\int_K|g(x)|dx\leq \int_{\Omega}|g(x)|\Psi(|u(x)|) dx\leq \max\left\{C^{p_\Psi^-},C^{p^+_\Psi}\right\}\Psi\circ\Phi^{-1}\left(  \int_{\Omega}\Phi(|\nabla u(x)|) dx  \right).
\end{align*}
By taking infimum over all such $u$, we obtain
 $$\Psi(1)\int_K|g(x)|dx\leq \max\left\{C^{p_\Psi^-},C^{p^+_\Psi}\right\}\Psi\circ\Phi^{-1}\left(\text{Cap}_{\Phi}(K,\Omega)  \right).$$ 
  Therefore, $\textit{(ii)}$ holds and the best constant $D$ in $\textit{(ii)}$ satisfies $D\leq \max\left\{C^{p_\Psi^-},C^{p^+_\Psi}\right\}/\Psi(1)$.
 
\noindent Conversely, assume that condition $\textit{(ii)}$ holds. For $u\in \mathcal{C}^1_c(\Omega)$ and $k\in \mathbb{Z}$ denote $$E_k=\{x\in \Omega :|u(x)|>2^k\}\;\; \text{and} \; \; A_k=E_k\setminus E_{k+1}.$$
 Observe that $$\Omega=\{x\in \Omega:0\leq |u(x)|<\infty\}=\{x\in \Omega :u(x)=0\}\cup \bigcup_{i\in \mathbb{Z}}A_i.$$
Now using condition $\textit{(ii)},$ we get 
 \begin{align}
     \int_{\Omega}|g(x)|\Psi(|u(x)|)dx\nonumber &= \sum_{k\in \mathbb{Z}}\int_{A_{k+1}}|g(x)|\Psi(|u(x)|)dx\nonumber \\&\leq\sum_{k\in \mathbb{Z}}\Psi(2^{k+2})\int_{A_{k+1}}|g(x)|dx\nonumber\\&\leq D \sum_{k\in \mathbb{Z}} \Psi(2^{k+2})\Psi\circ\Phi^{-1}\left(  \text{Cap}_{\Phi}(\overline{A_{k+1}},\Omega)\right)\nonumber\\&\leq  4^{p^+_\Psi}D\sum_{k\in \mathbb{Z}} \Psi(2^{k})\Psi\circ\Phi^{-1}\left(  \text{Cap}_{\Phi}(\overline{A_{k+1}},\Omega)\right)\label{eqn::7},
\end{align}
where the last inequality follows from \eqref{eqn-delta2-2}. Furthermore, by Proposition \ref{lemcap}, there exists $C_2\geq 1$ such that 
\begin{align}
\Psi(2^{k})\Psi\circ\Phi^{-1}\left(  \text{Cap}_{\Phi}(\overline{A_{k+1}},\Omega)\right)&=\Psi\circ\Phi^{-1}\left(\Phi(2^{k})\right)\Psi\circ\Phi^{-1}\left(  \text{Cap}_{\Phi}(\overline{A_{k+1}},\Omega)\right)\nonumber\\& \leq C_2\Psi\circ\Phi^{-1}\left(\Phi(2^{k}) \text{Cap}_{\Phi}(\overline{A_{k+1}},\Omega)\right).\label{eqn1-cap}
 \end{align}
Next to estimate $\text{Cap}_{\Phi}(\overline{A_{k+1}},\Omega)$, we choose a smooth function $\alpha:[0,1]\rightarrow \mathbb{R}$ satisfying 
 \[   
\alpha(t) = 
     \begin{cases}
       \text{0} &\quad\text{if $0\le t <\frac{1}{4}$},\\
       \text{1} &\quad\text{if $ \frac{1}{2}\leq t\le 1$ }.\\ 
     \end{cases}
\]
For each $k\in \mathbb{Z}$, define $u_k:\Omega\rightarrow [0, 1] $ by  
 \[   
u_k(x) = 
     \begin{cases}
       \text{1} &\quad\text{if $|u(x)|\geq 2^{k+1}$},\\
        \text{$\alpha\left(\frac{|u(x)|}{2^k}-1\right)$} &\quad\text{if $2^k<|u(x)|<2^{k+1}$},\\
        \text{0} &\quad\text{if $|u(x)|\leq 2^k$}.\\
     \end{cases}
\]
Clearly $u_k\in \mathcal{C}_c^1(\Omega)$ and $u_k\equiv 1$ on $\overline{E_{k+1}}\supset \overline{A_{k+1}}$. Moreover, $|\nabla u_k(x)|\leq \|\alpha^\prime\|_{L^\infty([0,1])}\frac{|\nabla u(x)|}{2^k}$, for $x\in \Omega $ such that $2^k<|u(x)|\leq 2^{k+1}$. Therefore, the definition of capacity and \eqref{eqn-delta2-2} gives
\begin{align}
    \text{Cap}_{\Phi}(\overline{A_{k+1}},\Omega)\leq \int_{\Omega}\Phi(|\nabla u_k(x)|)dx&\leq \int_{A_k}\Phi\left(\frac{\|\alpha^{\prime}\|_{L^\infty([0,1])}|\nabla u(x)|}{2^k}\right)dx\nonumber\\&\leq C_1\int_{A_k}\Phi\left(\frac{|\nabla u(x)|}{2^k}\right)dx,\label{eqn2-cap}
\end{align}
where $C_1=\max\left\{\|\alpha^{\prime}\|_{L^\infty([0,1])}^{p^-_\Phi},\|\alpha^{\prime}\|_{L^\infty([0,1])}^{p^+_\Phi}\right\}$. 
Since $\tilde{\Phi}\in \Delta^\prime,$ by \eqref{eqn-7*} we get a constant $C_3\geq 1$  such that
 \begin{align}\label{eqn3-cap}
\Phi(2^{k})\Phi\left(\frac{|\nabla u(x)|}{2^k}\right)\leq  C_3\Phi\left(|\nabla u(x)|\right),\;\;\;x\in \Omega.
 \end{align}
Combining \eqref{eqn1-cap}, \eqref{eqn2-cap}, and \eqref{eqn3-cap}, we obtain
\begin{align}
\Psi(2^{k})\Psi\circ\Phi^{-1}\left(\text{Cap}_{\Phi}(\overline{A_{k+1}},\Omega)\right)&\leq  C_2 \Psi\circ\Phi^{-1}\left(C_1\Phi(2^k)\int_{A_k}\Phi\left(\frac{|\nabla u(x)|}{2^k}\right)dx\right)\nonumber\\&\leq C_2 \Psi\circ\Phi^{-1}\left(C_1C_3\int_{A_k}\Phi\left(|\nabla u(x)|\right)dx\right).\label{eqn-thm5-1}
 \end{align}
Since $\Psi\circ\Phi^{-1}$ is super-additive, there exists $C_4>0$ such that (see Definition \ref{dfn1})
 \begin{align*}
 \sum_{k\in \mathbb{Z}} \Psi\circ\Phi^{-1}\left(C_1C_3\int_{A_k}\Phi\left(|\nabla u(x)|\right)dx\right)&\leq C_4 \Psi\circ\Phi^{-1}\left(C_1C_3\sum_{k\in \mathbb{Z}}\int_{A_k}\Phi\left(|\nabla u(x)|\right)dx\right)\nonumber\\&= C_4 \Psi\circ\Phi^{-1}\left(C_1C_3\int_{\Omega}\Phi\left(|\nabla u(x)|\right)dx\right).
 \end{align*}
Thus, from \eqref{eqn::7} and \eqref{eqn-thm5-1}  we get 
 \begin{align*}
   \int_\Omega|g(x)|\Psi(|u(x)|)dx&\leq  4^{p^+_\Psi}C_2C_4D\Psi\circ\Phi^{-1}\left(C_1C_3\int_{\Omega}\Phi\left(|\nabla u(x)|\right)dx\right).
 \end{align*}
Apply $\Psi^{-1}$ on both sides of  the above inequality and use \eqref{eqn-6} (for $\tilde{\Psi}$) to get
\begin{align*}
    \Psi^{-1}\left(\int_{\Omega}|g(x)|\Psi(|u(x)|)dx\right)&\leq C_5\Psi^{-1}\left( D\right)\Phi^{-1}\left(\int_{\Omega} \Phi\left(|\nabla u(x)|\right)dx\right),
\end{align*}
for some $C_5>0$. 
Hence $\textit{(i)}$ holds, and the best constant $C$ in \eqref{modular} satisfies $C\leq C_5\Psi^{-1}(D)$.
  \qed

\section{Compactness and existence of the solutions.}
In this section, we prove the existence of the eigenvalues of \eqref{eqJ}. Towards this, we prove the following elementary lemma.
\begin{lem}\label{lemma:L1}
Let $\Psi$ be a Young function satisfying the $\Delta_2$-condition, and $(u_n)$ be a sequence such that $u_n\to u$  in $L^\Psi(\Omega)$. Then  $\Psi(|u_n|)\to \Psi(|u|)$  in $L^1(\Omega)$.
\begin{proof}
Since $\psi$ is the right derivatives of $\Psi$, 
$$|\Psi(s)-\Psi(t)|\leq \psi(\max\{s,t\})|s-t|\leq \psi(s+t)|s-t|,\;\;\;\forall\, s,t\in (0,\infty).$$
Now, the H\"older's inequality for Young functions gives
 \begin{align}\label{eqW}
     \int_{\Omega}|\Psi(|u_n(x)|)-\Psi(|u(x)|)|\,dx&\leq \int_{\Omega}\psi(|u_n(x)|+|u(x)|)\;|u_n(x)-u(x)|\,dx\nonumber\\&\leq 2\|\psi(|u_n|+|u|)\|_{L^{\tilde{\Psi}}(\Omega)}\|u_n-u\|_{L^\Psi(\Omega)}.
 \end{align}
 Next, we estimate $\|\psi(|u_n|+|u|)\|_{L^{\tilde{\Psi}}(\Omega)}$. 
By \eqref{eqn-9} 
we get $\tilde{\Psi}(t)\leq t\tilde{\psi}(t)$ and $t\psi(t)\leq \Psi(2t),$ and $\tilde{\psi}\circ\psi(t)\leq t.$ Thus, we obtain
 \begin{align*}
\int_{\Omega}\tilde{\Psi}\left(\psi(|u_n(x)|+|u(x)|)\right)dx& \leq \int_{\Omega}\psi\left(|u_n(x)|+|u(x)|\right)\;\tilde{\psi}\circ\psi\left(|u_n(x)|+|u(x)|\right)\,dx\\&\leq \int_{\Omega}\Psi\left(2(|u_n(x)|+|u(x)|)\right)dx.
\end{align*}
Therefore, the definition of the Luxembourg gives
we have
\begin{equation}\label{eqn-lem-com}
    \|\psi(|u_n|+|u|)\|_{L^{\tilde{\Psi}}(\Omega)}\leq 2\|u_n+u\|_{L^\Psi(\Omega)}.
\end{equation} 
Since $u_n\to u$ in $L^\Psi(\Omega)$, there exists $B_1>0$ such that  $\|u_n+u\|_{L^\Psi(\Omega)} \leq B_1$, for every $n\in\mathbb{N}$. Hence, the result follows from  \eqref{eqW} and \eqref{eqn-lem-com}.
\end{proof}
\end{lem}
\begin{lem}[{\it{Lagrange multipliers theorem}}]\cite[Theorem 4]{Browder1965}\label{lagrange}
Let $V$ be a Banach space, $f$ and $g$ be two real-valued functions on $V$ that are Fréchet differentiable at $v_0\in V.$ If $g^\prime(v_0)\neq 0$ and $v_0$ is a point of local minimum of $f$ with respect to the set $\{v : g(v) = g(v_0)\},$
then there exists $\lambda\in \mathbb{R}$ such that $f^\prime(v_0)=\lambda g^\prime(v_0).$
\end{lem}
Now, we are ready to prove Theorem \ref{solu}.

\noindent\textbf{Proof of Theorem \ref{solu}:} Let $G_\Psi$ be as given in  \eqref{eqn-G-Psi}. Our proof is divided into two steps. 

 {\it{\underline{\textbf{$G_\Psi$ is compact:}}}} We adapt the proof of \cite[Lemma 6.1]{Anoop2021} to our case.  Let $(u_n)$ be a sequence that converges weakly to $u$ in $\mathcal{D}^{1,\Phi}_0(\Omega)$. Then  there exists a constant  $B\in [0,\infty)$ such that $$B=\sup_{n\in \mathbb{N}}\left\{\int_{\Omega}\left(\Phi(|\nabla u|)+\Phi(|\nabla u_n|)\right)dx\right\}.$$ Let $0<\epsilon<1$. Since $g\in \mathcal{F}_V(\Omega)$ and $\delta$ is a positive function satisfying $\delta(t)\to 0$ as $t\to 0$, there exists $g_\epsilon\in \mathcal{C}_c(\Omega)$ such that $\delta\left(\|g-g_\epsilon\|_{V(\Omega)}\right)< \epsilon$. For $K=supp(g_\epsilon)$, observe that

 \begin{align}\label{eqL}
     G_{\Psi}(u_n)-G_{\Psi}(u)&=\int_{\Omega}g\left(\Psi(|u_n|)-\Psi(|u|)\right)dx\nonumber\\&\leq \int_{K}|g_\epsilon|\left|\Psi(|u_n|)-\Psi(|u|)\right| dx+\int_{\Omega}(|g-g_\epsilon|)\left|\Psi(|u_n|)-\Psi(|u|)\right| dx. 
 \end{align}
 We estimate the second integral on the right-hand side of the above inequality, using \eqref{eqK} as
  \begin{align*}
  \int_{\Omega}(|g-g_\epsilon|)\left|\Psi(|u_n|)-\Psi(|u|)\right| dx&\leq  \int_{\Omega}(|g-g_\epsilon|)\Psi(|u_n|) dx+ \int_{\Omega}(|g-g_\epsilon|)\Psi(|u|)dx\\&\leq \epsilon \Psi\circ\Phi^{-1}\left(\int_{\Omega}\Phi(|\nabla u_n|)dx\right)+\epsilon \Psi\circ\Phi^{-1}\left(\int_{\Omega}\Phi(|\nabla u|)dx\right)\\&\leq 2\epsilon \Psi\circ\Phi^{-1}(B).
 \end{align*}
  Moreover, by Proposition \ref{prop:comp} and Lemma \ref{lemma:L1}, there exists $n_1\in \mathbb{N}$ such that 
 $$\int_{K}|g_\epsilon|\left|\Psi(|u_n|)-\Psi(|u|)\right|dx<\epsilon, \;\;\;\forall\, n\geq n_1. $$
Thus, from (\ref{eqL}) we obtain $$ |G_{\Psi}(u_n)-G_{\Psi}(u)|<\left(2\Psi\circ\Phi^{-1}(B)+1\right)\epsilon,\;\;\; \forall\, n\geq n_1.$$
 Hence $G_{\Psi}(u_n)\rightarrow G_{\Psi}(u)$  and consequently $G_\Psi$ is compact on $\mathcal{D}^{1,\Phi}_0(\Omega)$.

{\it{\underline{\textbf{Existence of solutions:}}}} By $(ii)$ of Proposition \ref{prop2-pell}, we have $$J_\Phi(u)=\int_{\Omega}\Phi(|\nabla u|) dx\geq \|\nabla u\|_{L^\Phi(\Omega)}-1.$$ 
Thus, $J_\Phi$  is coercive. Since $\Phi,\tilde{\Phi}\in\Delta_2$,
$\mathcal{D}^{1,\Phi}_0(\Omega)$ is a reflexive space (see \cite[Proposition 3.1]{Cianchi2017}). Recall that $\lambda_1(r)=\inf\left\{J_\Phi(u):u\in N_r\right\}$ and $ N_r=\left\{u\in \mathcal{D}^{1,\Phi}_0(\Omega):G_\Psi(u)=r\right\}$.  Let $(u_n)$ be a minimizing sequence for $\lambda_1(r)$ on the set $N_r$.  By coercivity of $J_\Phi$ and reflexivity of  $\mathcal{D}^{1,\Phi}_0(\Omega)$, there exists a
sub-sequence $(u_{n_k})$  converging weakly to $u_1$ in
$\mathcal{D}^{1,\Phi}_0(\Omega)$. Now using the compactness of $G_{\Psi}$, we have $u_1\in N_r$. Furthermore, the lower semi-continuity of $J_\Phi$ with respect to the
weak convergence (see \cite[Theorem 2.2.8]{Lars2011}) gives $$ \lambda_1(r)=\underline{\lim}_{k\to \infty}\int_{\Omega}\Phi(|\nabla u_{n_k}|)dx\geq \int_{\Omega}\Phi(|\nabla u_1|)dx\geq  \lambda_1(r). $$
 Therefore, $\lambda_1(r)$ is attained, and $J_\Phi$ admits a minimizer $u_1$ over $N_r$. 
Moreover, $J_\Phi$ and $G_\Psi$ are Fréchet derivable (see \cite[Proposition 2.17]{salort2021}, \cite[Lemma A.3]{Fukagai2006}) 
  with  derivatives  given by $$\langle J_\Phi^{\prime}(u), v \rangle=\int_{\Omega}\varphi(|\nabla u|)\frac{\nabla u}{|\nabla u|}\cdot\nabla v\, dx, \;\;\;\,\langle G_\Psi^{\prime}(u), v\rangle=\int_{\Omega}g\psi(|u|)\frac{uv}{|u|}dx.$$
Now using $t\psi(t)\asymp \Psi(t)$ (see \eqref{eqn-1}) we get $$\langle G_\Psi^{\prime}(u_1), u_1\rangle=\int_{\Omega}g\psi(|u_1|)|u_1|dx\asymp \int_{\Omega}g\Psi(|u_1|)dx=G_\Psi(u_1)=r\neq 0.$$
Thus, $G^\prime_\Psi(u_1)\neq 0$ and hence by 
Lemma \ref{lagrange}, there exists $\tilde{\lambda}_1(r)\in \mathbb{R}$ such that  $$\int_{\Omega}\varphi(|\nabla u_1|)\frac{\nabla u_1}{|\nabla u_1|}\cdot\nabla v dx= \tilde{\lambda}_1(r)\int_{\Omega}g\psi(|u_1|)\frac{u_1 v}{|u_1|}dx,\;\;\;\forall \,v\in \mathcal{D}^{1,\Phi}_0(\Omega).$$
Since $|u_1|\in  \mathcal{D}^{1,\Phi}_0(\Omega)$, $J_\Phi(|u_1|)=J_\Phi(u_1)$, and $G_\Psi(|u_1|)=G_\Psi(u_1)$, we can take $u_1(x)\geq 0,$ for a.e. $x\in \Omega$. As $g$ is non-zero non-negative, from the above inequality, we obtain $\tilde{\lambda}_1(r)>0.$ 
This completes the proof.
\qed

 \section{Concluding remarks:}
This section provides various examples of Young functions satisfying the assumptions of theorems in the introduction and identifies the associated admissible function spaces for the weight function $g$. We relate the admissible function spaces for $g$ with some classical function spaces such as Lorentz and Lorentz-Zygmund spaces. For $q\in (0,\infty),$ recall the Lorentz and Lorentz-Zygmund spaces 
$$L^{q,\infty}(\Omega)=\left\{g \in \mathcal{M}(\Omega): \sup_{0<t<|\Omega|}t^{1/q}g^{**}(t)<\infty \right\},$$
     $$\mathcal{L}^{1,\infty;q}(\Omega)=\left\{g\in \mathcal{M}(\Omega): \sup_{0<t<|\Omega|}tg^{*}(t)\left(\log\left(\frac{e|\Omega|}{t}\right)\right)^q<\infty\right\}.$$  
\begin{rem}\label{exmp5.1} Consider the Young function $A_p(t):=t^p$ with $p\in(1,\infty).$  Clearly, $\,p_{A_p}^+=p$ and $\tilde{A_p}\asymp t^{p^\prime}$. Depending on $N$ and $p$, we see below that the admissible function spaces for  $g$ given in Theorem \ref{thm:3}, Theorem \ref{larentzthm}, and Theorem \ref{cor}  correspond to certain Lorentz and Lorentz Zygmund spaces.
\begin{enumerate} [(i)] 
     \item For  $\Phi=A_p$ with  $N>p$, both Theorem \ref{thm:3} and  Theorem \ref{larentzthm} are applicable.
    In this case, Theorem \ref{thm:3} gives $L^{\tilde{B}_\Phi}(\Omega)=L^{N/p}(\Omega)$ and  Theorem \ref{larentzthm} gives a larger space
    $L^{\Phi,\infty}(\Omega)=L^{N/p,\infty}(\Omega).$

       \item 
       For $\Phi=A_p,$ next we find the function space $X_\Phi(\Omega)$ given by Theorem \ref{cor}. Recall that 
 $$ X_{\Phi}(\Omega)=\left\{g\in \mathcal{M}(\Omega) :\sup_{0<r<|\Omega|}\left\{g^{**}(r)\eta_\Phi(r)\right\}<\infty\right\}.$$ It is easy to compute that (see \eqref{eqA*} and \eqref{eta-Phi})  $$Q_\Phi(s)\asymp s^\frac{p(N-1)}{N(p-1)},\;\;\;\forall\,s>0,$$ and for $0<r<|\Omega|$, 
 \[\eta_\Phi(r) \asymp
     \begin{cases}
     \text{$r^\frac{p}{N}$} &\quad\text{if $N> p,\,|\Omega|=\infty$},\\
     \text{$r\left(\log\left(\frac{|\Omega|}{r}\right)\right)^{N-1}$} & \quad\text{if $N=p,\,|\Omega|<\infty$},\\ \text{$r\left|r^\frac{p-N}{N(p-1)}-|\Omega|^\frac{p-N}{N(p-1)}\right|^{p-1}$} &\quad\text{if  $N\neq p,\,|\Omega|<\infty$.}
     \end{cases}
\] 
Thus, $\Phi$ satisfies \eqref{8} for all the above three cases. Now, one can identify the following:
\[X_\Phi(\Omega) =
     \begin{cases}
     \text{$L^{\frac{N}{p},\infty}(\Omega)$} &\quad\text{if $N> p,$}\\
     \text{$\mathcal{L}^{1,\infty;N}(\Omega)$} & \quad\text{if $N=p,\,|\Omega|<\infty$},\\ \text{$L^1(\Omega)$} &\quad\text{if  $N< p,\,|\Omega|<\infty$.}
     \end{cases}
\]
Note that, for $N=p$, we need to use argument as in \cite[Proposition A.1] {ujjal2020}. 
 \end{enumerate} 
  \end{rem}

  \begin{rem}\label{exm-1}  Now we consider some Young function $\Phi$ other than $A_p$ and try to find the associated admissible function spaces given by  
  Theorem \ref{larentzthm} and Theorem \ref{cor}.
   \begin{enumerate}[(i)] 
  \item
For $1<p<q,$ let $p\,\overline{q}<N\overline{p}\,$ and
      \[   
\Phi(t) = 
      \begin{cases}
        \text{$\frac{t^{q}}{\overline{q}}$} &\quad\text{if $ t\in [0,1]$},\\
      \text{$\frac{1}{\overline{p}}(t^{p}-1)+\frac{1}{\overline{q}}$}&\quad\text{if $t\in (1,\infty)$},
     \end{cases}
 \] 
where $\overline{q}=q(q^\prime)^{q-1}$ and $\overline{p}=p(p^\prime)^{p-1} $. Then, one can verify that  $p^+_\Phi=\frac{p\,\overline{q}}{\overline{p}}$ and
   \[   
 \tilde{\Phi}(t) = 
       \begin{cases}
         \text{$t^{q^\prime}$} &\quad\text{if $ t\in [0,1]$},\\
       \text{$t^{p^\prime}$}&\quad\text{if $t\in (1,\infty)$}.
      \end{cases}
  \] 
Moreover, $\Phi\in \Delta_2$  and $\tilde{\Phi}\in \Delta^\prime$ (see Example \ref{exam}). Thus, $\Phi$ satisfies all the assumptions of Theorem \ref{larentzthm} and hence $L^{\Phi,\infty}(\Omega)\subset \mathcal{H}_{\Phi,\Phi}(\Omega)$. 
In fact, if $|\Omega|<\infty,$ then one can identify that $L^{\Phi,\infty}(\Omega)=L^{\frac{N}{q},\infty}(\Omega).$ In general, $L^{\frac{N}{p},\infty}(\Omega)\cap L^{\frac{N}{q},\infty}(\Omega) \subset L^{\Phi,\infty}(\Omega)$. 

\item  Notice that the Young function considered in the above example does not satisfy the $\Delta^\prime$-condition and hence Theorem \ref{cor} is not applicable for this Young function. 
For $N>p$ and $p \leq q < \min\left\{\frac{Np-1}{N-1},\frac{N+Np-p}{N}\right\}$, consider  $\Phi(t)=\max\{t^p,t^q\}.$ To identify the admissible function space $X_\Phi(\Omega)$ as given in  Theorem \ref{cor} we need to compute $\eta_\Phi$. Recall that $$\eta_\Phi(r)= r\varphi\left(\int_r^{|\Omega|}\frac{1}{Q_\Phi(s)}ds\right),\;\; r\in (0,|\Omega|) $$
where  $$ Q_\Phi(s) =\Phi(\zeta(s))\tilde{\Phi}\left(\frac{1}{\Phi(\zeta(s))}\right),\; s>0$$ 
with $\zeta(s)=s^{\frac{1}{N}-1}$.
 Since $\tilde{\tilde{\Phi}}=\Phi,$ $(i)$ of Remark \ref{exm-1} gives $\tilde{\Phi}(t)\asymp t^{p^\prime}$ for $t\in (0,1)$ and $\tilde{\Phi}(t)\asymp t^{q^\prime}$ for $t\in [1,\infty)$. 
Now, one can verify that 
\[
Q_\Phi(s) \asymp
     \begin{cases}
       \text{$\left(\frac{1}{{\Phi(\zeta(s))}}\right)^{1/(p-1)}= s^\frac{q(N-1)}{N(p-1)}$} &\quad\text{if $s\in(0, 1)$},\\
     \text{$\left(\frac{1}{{\Phi(\zeta(s))}}\right)^{1/(q-1)}= s^\frac{p(N-1)}{N(q-1)}$}&\quad\text{if $s\in [1,\infty)$}.
     \end{cases}
\]
Observe that  $\varphi(t)=pt^{p-1}$ if $t<1$ and $\varphi(t)=qt^{q-1}$ if $t\geq 1$. 
Thus, for all $r\in (0,|\Omega|)$ we have
\[
\eta_\Phi(r)\asymp 
 \begin{cases}
       \text{$ r^{\alpha(q-1)+1}
       $} &\quad\text{near zero},\\
      \text{$r^{\beta(p-1)+1}$}&\quad\text{near infinity when $|\Omega|=\infty$,}
     \end{cases}
\]
where $\alpha=\frac{N(p-1)-q(N-1)}{N(p-1)} <0 $ and $\beta=\frac{N(q-1)-p(N-1)}{N(q-1)} <0.$
Consequently, 
$\lim_{r\to 0}\eta_\Phi(r)=0$ (as $\alpha(q-1)+1>0$), i.e. \eqref{8} is satisfied. Note that $\Phi$ satisfies all assumptions of Theorem \ref{cor} and hence $X_\Phi(\Omega)\subset \mathcal{H}_{\Phi,\Phi}(\Omega)$. 
Moreover, from the expression of $\eta_\Phi,$ one can identify that   $X_\Phi(\Omega)=L^{\frac{1}{\alpha(q-1)+1},\infty}(\Omega)$ if $|\Omega|<\infty$. In general,     
$L^{\frac{1}{\alpha(q-1)+1},\infty}(\Omega)\cap L^{\frac{1}{\beta(p-1)+1},\infty}(\Omega)\subset X_\Phi(\Omega)$.
\end{enumerate}
\end{rem}

\begin{rem}\label{exam-5.3} Next, considering $\Phi=A_p$ and $\Psi=A_q$ with $p,q\in (1,\infty),$ we identify admissible function spaces for $g$ given by Theorem \ref{thm-bdd} and Theorem \ref{theorem3}. 
        \begin{enumerate}[(i)]
        \item If $N<p$ and $|\Omega|<\infty$, then $\Phi$ satisfies \eqref{eqG1}. Thus, by Theorem \ref{thm-bdd}, we conclude that $L^1(\Omega)\subset \mathcal{H}_{\Phi,\Psi}(\Omega)$.
            \item    
Let $q \geq p$, and $q \leq p^*$ when $N>p$.  Recall  $\eta_{\Phi,\Psi}$ as given in \eqref{eta-phi,psi}. For $r \in (0,|\Omega|)$, one can check that
    \[\eta_{\Phi,\Psi}(r) \asymp
     \begin{cases} 
       \text{$r^\frac{N(p-q)+pq}{Np}$} &\quad \text{if $N> p,\,|\Omega|=\infty$,}\\
       \text{$r\left(\log\left(\frac{|\Omega|}{r}\right)\right)^\frac{q}{N^\prime}$} & \quad\text{if $N=p,\,|\Omega|<\infty$},\\\text{$r\left|r^\frac{p-N}{N(p-1)}-|\Omega|^\frac{p-N}{N(p-1)}\right|^\frac{q(p-1)}{p}$} &\quad   \text{if $N\neq  p,\, |\Omega|<\infty $.}
     \end{cases}
\]
 Notice that $\Psi\circ\Phi^{-1}$ is super-additive (as  $q\geq p$).  For $N\leq p$ and $|\Omega|<\infty$, \eqref{7} holds for any $q>1$. However, for $N>p$,  \eqref{7} holds only if $q\leq p^*$.  Thus, Theorem \ref{theorem3} is applicable in these cases. This concludes $X_{\Phi,\Psi}(\Omega) \subset\mathcal{H}_{\Phi,\Psi}(\Omega)$.  Furthermore, one can identify that 
\[X_{\Phi,\Psi}(\Omega)=
     \begin{cases}
     \text{$L^{\left(\frac{p^*}{q}\right)^\prime,\infty}(\Omega)$} &\quad\text{if $N>p$ and $ q\in [p, p^*],$}\\
     \text{$\mathcal{L}^{1,\infty;\frac{q+N^\prime}{N^\prime}}(\Omega)$} & \quad\text{if $N=p\leq q$ and $|\Omega|<\infty$},\\ \text{$L^1(\Omega)$} &\quad\text{if  $N<p\leq q$ and $|\Omega|<\infty$.}
     \end{cases}
\]
This shows that Theorem \ref{theorem3} extends \cite[Theorem 1.1]{Visciglia2005} and \cite[Theorem 1.2]{Anoop2021}.
\end{enumerate}
\end{rem}

\begin{rem} In this remark, we give examples of Young functions $\Psi$ other than $A_q$ that are applicable for Theorem \ref{thm-bdd} and Theorem \ref{theorem3}.
\begin{enumerate}[(i)]
    \item 
For $q>p>N$ and $|\Omega|<\infty$, consider $\Phi=A_p$ and $\Psi(t)=\max\{t^p,t^q\}.$
 Since $p_\Phi^-=p>N,$ it follows that $\Phi$ satisfies \eqref{eqG1}.
Therefore, by Theorem \ref{thm-bdd}, we have $L^1(\Omega)\subset \mathcal{H}_{\Phi,\Psi}(\Omega)$.
\item  For $p \in (1,N)$ and $q \in[ p,p^*]$, consider $\Phi=A_p$ and $\Psi(t)=\max\{t^p,t^q\}$. 
Then, we can see that $\tilde{\Phi}\asymp A_{p^\prime},$ and $\Psi\circ \Phi^{-1}(t)=\max\{t,t^{q/p}\}$ is a super-additive function (as it is convex).  To determine the admissible function space $X_{\Phi,\Psi}(\Omega)$ given in Theorem \ref{theorem3}, we first understand the behaviour of $\eta_{\Phi,\Psi}$. Recall that
 $$\eta_{\Phi,\Psi}(r)=r\Psi\left(\|\zeta\|_{L^{\tilde{\Phi}}((r,|\Omega|))}\right),\;\;\;\,r\in (0,|\Omega|),$$
where $\zeta(s)=s^{\frac{1}{N}-1}$. Since $\tilde{\Phi}\asymp A_{p^\prime}$ and $N>p$, one can compute that for all $r\in (0,|\Omega|),$
\[
\|\zeta\|_{L^{\tilde{\Phi}}((r,|\Omega|))}\asymp\|\zeta\|_{L^{p^\prime}((r,|\Omega|))}\asymp 
\begin{cases}
\text{$\left (r^{\frac{p-N}{N(p-1)}}-|\Omega|^{\frac{p-N}{N(p-1)}}\right)^{1/p^\prime}$}&\quad\text{if $|\Omega|<\infty,$}\\ \text{$ r^\frac{p-N}{Np}$}&\quad\text{if $|\Omega|=\infty$}.  
\end{cases}
\]
Now,  
$N>p$ 
implies
$\left(r^{\frac{p-N}{N(p-1)}}-|\Omega|^{\frac{p-N}{N(p-1)}}\right)^{1/p^\prime}\asymp r^{\frac{p-N}{Np}}$ near zero. 
Therefore, for all $r\in (0,|\Omega|)$ we have 
\[
\eta_{\Phi,\Psi}(r)\asymp
\begin{cases}
\text{$ r^\frac{N(p-q)+pq}{Np}$}&\quad\text{ near zero,}\\ \text{$ r^\frac{p}{N}$}&\quad\text{ near infinity when $|\Omega|=\infty$}.  
\end{cases}
\]
Consequently,  $\lim_{r\to 0}\eta_{\Phi,\Psi}(r)<\infty$ (as $q\leq p^*$), i.e. \eqref{7} holds.  Thus, the pair $(\Phi, \Psi)$ satisfies all the assumptions of Theorem \ref{theorem3} and hence  $X_{\Phi,\Psi}(\Omega)\subset \mathcal{H}_{\Phi,\Psi}(\Omega)$.  It is worth mentioning that $X_{\Phi,\Psi}(\Omega)=L^{\left(p^*/q\right)^\prime,\infty}(\Omega)$ when $|\Omega|<\infty.$ In general, $L^{\left(p^*/q\right)^\prime,\infty}(\Omega)\cap L^{N/p,\infty}(\Omega)\subset X_{\Phi,\Psi}(\Omega)$. 
 \end{enumerate}
\end{rem}

  \begin{center}
	{\bf Acknowledgments}
\end{center} 
Ujjal Das acknowledges the support of the Israel Science Foundation \!(grant $637/19$) founded by the
Israel Academy of Sciences and Humanities. Ujjal Das is also supported in
part by a fellowship from the Lady Davis Foundation.

\bibliographystyle{abbrvurl}
\bibliography{Reference}

\end{document}